\definecolor{darkgreen}{rgb}{0,0.5,0}
\definecolor{darkred}{rgb}{0.7,0,0}
\theoremstyle{plain}
\newtheorem{lemma}{Lemma}[section]
\newtheorem{thm}[lemma]{Theorem}
\newtheorem{prop}[lemma]{Proposition}
\theoremstyle{definition}
\newtheorem{defn}[lemma]{Definition}
\newtheorem{rmk}[lemma]{Remark}
\def\blbox{\quad \vrule height7.5pt width4.17pt depth0pt}
\newcommand{\cmt}[1]{\opt{draft}{\textcolor[rgb]{0.5,0,0}{
$\LHD$ #1 $\RHD$\marginpar{\blbox}}}}
\numberwithin{equation}{section}
\newcommand{\m}{\ensuremath{{\cal M}}}
\newcommand{\pl}[2]{{\frac{\partial #1}{\partial #2}}}
\newcommand{\al}{\alpha}
\newcommand{\be}{\beta}
\newcommand{\ga}{\gamma}
\newcommand{\Om}{\Omega}
\newcommand{\si}{\sigma}
\newcommand{\Si}{\Sigma}
\newcommand{\Th}{\Theta}
\newcommand{\vph}{\varphi}
\newcommand{\ep}{\varepsilon}
\newcommand{\R}{\ensuremath{{\mathbb R}}}
\newcommand{\N}{\ensuremath{{\mathbb N}}}
\newcommand{\C}{\ensuremath{{\mathbb C}}}
\newcommand{\Hyp}{\ensuremath{{\mathbb H}}}
\newcommand{\weakto}{\rightharpoonup}
\newcommand{\embed}{\hookrightarrow}
\newcommand{\tensor}{\otimes}
\newcommand{\grad}{\nabla}
\newcommand{\norm}[1]{\left\Vert#1\right\Vert}  
\def\blbox{\quad \vrule height7.5pt width4.17pt depth0pt}
\newcommand{\beq}{\begin{equation}}
\newcommand{\eeq}{\end{equation}}
\newcommand{\beqa}{\begin{equation}\begin{aligned}}
\newcommand{\eeqa}{\end{aligned}\end{equation}}
\newcommand{\brmk}{\begin{rmk}}
\newcommand{\ermk}{\end{rmk}}
\newcommand{\partref}[1]{\hbox{(\csname @roman\endcsname{\ref{#1}})}}
\newcommand{\half}{\frac{1}{2}}
\newcommand{\lie}{{\cal L}}
\newcommand{\avint}{{\int\!\!\!\!\!\!-}}
\newcommand{\dbar}{{\overline\partial}}
\newcommand{\zbar}{{\overline z}}
\newcommand{\PSL}{{\mathrm{PSL}}}
\newcommand*\tr{\mathop{\mathrm{tr}}\nolimits}
\newcommand{\M}{\ensuremath{{\mathcal M}}_{-1}}
\newcommand{\A}{\ensuremath{{\mathcal A}}}
\newcommand{\D}{\ensuremath{{\mathcal D}}}
\newcommand{\abs}[1]{\left\vert#1\right\vert} 
\newcommand{\eps}{\varepsilon}
\newcommand{\na}{\nabla}
\title{{\sc
Flowing maps to minimal surfaces
}
\\ 
}
\author{Melanie Rupflin and Peter M. Topping}
\date{\today}
\begin{document}
\maketitle

\begin{abstract}
We introduce a flow of maps from a compact surface of arbitrary genus to an arbitrary Riemannian manifold which has elements in common with both the harmonic map flow and the mean curvature flow, but is more effective at finding minimal surfaces.
In the genus 0 case, our flow is just the harmonic map flow, and it tries to find branched minimal 2-spheres
as in Sacks-Uhlenbeck \cite{SU} and Struwe \cite{struweCMH} etc.
In the genus 1 case, we show that our flow is exactly equivalent to that considered by Ding-Li-Liu \cite{DLL}.
In general, we recover the result of Schoen-Yau \cite{SY} and Sacks-Uhlenbeck \cite{SU_TAMS} that an incompressible map from a surface can be adjusted to a branched minimal immersion with the same action on $\pi_1$, and this minimal immersion will be homotopic to the original map in the case that $\pi_2=0$.
\end{abstract}

\section{Introduction}
\label{intro}

\subsection{Overview}

Given a map from a compact surface to a compact Riemannian manifold, one can consider the possibility of adjusting it somehow in order to find a minimal immersion. 
A first approach might be to start with an embedding and deform it to reduce its area, and taking this route one is led to the mean curvature flow. One can view this flow as the harmonic map flow where the domain metric is adjusted continually so that the map remains an isometry. 
Unfortunately, this flow develops complicated singularities, without additional hypotheses.

An alternative is to try to find critical points of the harmonic map energy with respect to variations of both the map \emph{and} the domain metric, which leads directly to 
a weakly conformal harmonic map, which (if nonconstant) is then a branched minimal immersion \cite{GOR}. A variational approach 
along these lines, minimising with respect to maps and then conformal structures, was successfully taken by Schoen-Yau \cite{SY} and Sacks-Uhlenbeck \cite{SU_TAMS}; the strategy was implicitly used in the solution of Douglas-Rado to the Plateau problem and its later variants
\cite{GM}.

In this paper, we show how a flow can find this branched minimal immersion in one go by simultaneously flowing both the map and the
domain in a coupled way,
following the gradient flow of the harmonic map energy. 
Intuitively, one would mainly like to evolve the domain by deforming its conformal structure, and we achieve that by flowing a Riemannian metric on the domain surface within the class of constant curvature metrics (with fixed area in the genus 1 case).
When the domain is a sphere, there is only one conformal structure available (modulo diffeomorphisms) and our flow will be seen to correspond to the classical harmonic map flow 
of Eells-Sampson \cite{ES} which evolves a map by the $L^2$-gradient flow of the harmonic map energy. 
In the special case that the domain is a torus, we will show that the flow agrees exactly with a flow introduced by Ding-Li-Lui \cite{DLL} which those authors had previously presented 
as a technical modification of an alternative flow.
In the higher genus case, the flow is new, and is a little harder to deal with because of the more complicated structure of Teichm\"uller space. Nevertheless, when combined with an existence theorem which will appear elsewhere \cite{rupflin_existence}, we show that incompressible maps flow for all time without the domain metric degenerating, and subconverge to a conformal harmonic map.
\subsection{Definition of the flow}
\label{flow_def_sect}

Let $M:=M_\gamma$ be a smooth closed orientable surface of genus $\gamma\geq 0$ equipped with a smooth metric $g$. 
Let $N=(N,G)$ be a smooth closed Riemannian manifold of any dimension, which we may view as being isometrically immersed in $\R^K$ for some $K\in \N$.

The harmonic map energy of a smooth map 
$u:(M,g)\to (N,G)$ is defined to be
$$E(u,g):=\half\int_M |du|^2d\mu_g.$$
Since the energy is invariant under conformal variations of the domain metric, we will restrict our attention to the set of hyperbolic (constant Gauss curvature $-1$) metrics $\M$ if $\gamma\geq 2$,
or to the flat metrics $\m_0$ of unit total area if $\gamma=1$, or to the metrics $\m_1$ of curvature $+1$ if $\gamma=0$. To consider general genus, we will sometimes unify the notation by writing $\m_c$ for $\M$, $\m_0$ or $\m_1$ as appropriate.

Because the energy remains invariant if we pull back both the map and the domain metric by the same diffeomorphism $f$, i.e. $E(u,g)=E(u\circ f,f^* g)$,
it is natural to identify pairs $(u_1,g_1)$ and $(u_2,g_2)$ if there exists a smooth diffeomorphism $f$ of $M$ 
homotopic to the identity  
such that $u_1=u_2\circ f$ and $g_1=f^* g_2$ and to study the energy
on 
$$\A=\{[(u,g)]: \, g\in \m_c, u\in C^\infty(M,N)\}.$$

Although $\A$ is defined here as a \emph{set}, if we follow the path laid out in Teichm\"uller theory (see \cite[Chapter 2]{tromba}) 
we can see a slight variant of $\A$ as a smooth infinite-dimensional manifold, and endow it with a natural metric structure. 
We will clarify some subtleties of that process in Appendix \ref{banach_manifold}, but in order to derive the new flow equations it suffices to consider formally a metric structure on $\A$ by taking an arbitrary point $[(u_0,g_0)]\in\A$, considering a path 
$(u(t),g(t))\in C^{\infty}(M,N)\times \m_c$ with 
$(u(0),g(0))=(u_0,g_0)$ and imagining how we should define the length of the `tangent vector' $(\pl{u}{t}(0),\pl{g}{t}(0))$.

The part $\pl{g}{t}(0)$ can be viewed as a 
tangent vector at $g_0$ in $\m_c$. 
This can be written (see for example \cite{tromba} and Appendix
\ref{banach_manifold}) as
$Re (\theta dz^2)+\lie_X g$ for some holomorphic quadratic differential $\Theta=\theta dz^2$ and vector field $X$ (where $z$ is a complex coordinate induced by $g_0$, and we adopt the convention that $dz^2=dz\tensor dz$) as described in \cite{tromba}, say.
The splitting of tangent vectors into `horizontal' part $Re (\theta dz^2)$ and `vertical' part $\lie_X g$ is a canonical $L^2$-orthogonal decomposition and does not depend on the choice of complex coordinate
(although $X$ itself will not be unique in the genus 0 or 1 case since we can add any Killing field). 
If we then modify the path $(u(t),g(t))$ by an appropriate diffeomorphism, we will be able to assume that $\pl{g}{t}(0)$ is entirely horizontal. More precisely, if we let $f_t$ be the family of diffeomorphisms $M\mapsto M$ generated by $-X$, with
$f_0$ the identity (for $t$ in a 
neighbourhood of $0$) so that 
$-\lie_X g_0=\frac{d}{dt}|_{t=0}f_t^*g_0$
then we may define 
$\tilde u(t)=u(t)\circ f_t$ and $\tilde g(t)=f_t^*g(t)$ 
and find that
$\pl{\tilde g}{t}(0)=-\lie_X g_0+\pl{g}{t}(0)
=Re (\theta dz^2)$
(i.e. it is horizontal) and $(\tilde u(t),\tilde g(t))$
generates the same path through $\A$ as $(u(t),g(t))$.

Now we can define an inner product on $\cal A$: for a tangent vector $V$ at $[(u,g)]$ represented by 
$(\pl{u}{t},Re (\theta dz^2))$, the inner product is determined by the norm defined by
$$\|V\|^2:=\bigg\|\pl{u}{t}\bigg\|_{L^2}^2+\eta^{-2}\|Re(\theta dz^2)\|_{L^2}^2,$$
for some constant $\eta>0$.
The metric $g$ also induces a Hermitian inner product on quadratic differentials, and this allows us to write
\begin{equation}
\label{real_complex}
\|\theta dz^2\|_{L^2}^2=2\|Re(\theta dz^2)\|_{L^2}^2.
\end{equation}
(Note that if 
$g=
\rho^{2}(dx^2+ dy^2)$ then $|dz^2|=2\rho^{-2}$.)

Now that we are equipped with a metric structure on $\A$, albeit at a formal level, we are in a position to consider the gradient flow of $E$ on $\A$, and to this end we
consider the differential of the energy $E$ viewed as a functional on $\cal A$.
Then we have the formula
$$dE_{(u,g)}\left(\pl{u}{t},\pl{g}{t}\right)=-\int_M \langle \tau_g(u),\pl{u}{t}\rangle+\frac{1}{4}\langle Re(\Phi(u,g)),\pl{g}{t}\rangle$$
where $\Phi(u,g)=\phi dz^2:=4(u^*G)^{(2,0)}$ is the Hopf differential of $u$,
that is, $\phi=|u_x|^2-|u_y|^2-2i\langle u_x,u_y\rangle$,
and $\tau_g(u):=\tr\grad du$ is the tension field \cite{ES}.
Writing $P_g$ for the $L^2$-orthogonal projection of the space of quadratic differentials onto the space of holomorphic quadratic differentials (with respect to the Hermitian inner product induced by $g$) and restricting $\pl{g}{t}$ to be horizontal (i.e. to be the real part of a \emph{holomorphic} $\theta dz^2$) we have

\begin{equation}
\begin{aligned}
dE_{(u,g)}\left(\pl{u}{t},Re (\theta dz^2)\right)&=-\int_M \langle \tau_g(u),\pl{u}{t}\rangle+\frac{1}{4}\langle Re(\phi dz^2),Re(\theta dz^2)\rangle\\
&=-\int_M \langle \tau_g(u),\pl{u}{t}\rangle+\frac{1}{4}\langle Re(P_g(\phi dz^2)),Re(\theta dz^2)\rangle\\
&=-\big\langle (\tau_g(u),\frac{\eta^2}{4} Re(P_g(\phi dz^2))),(\pl{u}{t},Re(\theta dz^2))\big\rangle_{\cal A}.
\end{aligned}
\end{equation}
Here we have used that if two quadratic differentials are orthogonal with respect to the $L^2$ Hermitian inner product, then their real parts are also orthogonal. 

We can then write the gradient flow of $E$ with respect to the inner product we have defined on $\cal A$ as
$[(u(t),g(t))]$, where
\begin{equation}
\label{flow}
\pl{u}{t}=\tau_g(u);\qquad \pl{g}{t}=\frac{\eta^2}{4} Re(P_g(\Phi(u,g))),
\end{equation}
and the energy decays according to
\begin{equation}
\label{energy_decay}
\frac{dE}{dt}=-\int_M |\tau_g(u)|^2+\left(\frac{\eta}{4}\right)^2 |Re(P_g(\Phi(u,g)))|^2.
\end{equation}
It would be reasonable to set $\eta=2$ at this stage, 
but we retain this generality for the moment because it will make the comparison with \cite{DLL} a little more transparent.

By Liouville's theorem, the space of holomorphic 
quadratic differentials on the sphere consists only of the zero element, and on the torus, its real dimension is 2 (consisting in this case only of parallel quadratic differentials).
By the Riemann-Roch theorem, the space of holomorphic quadratic differentials for genus $\gamma\geq 2$ 
is a vector space of real dimension $6\gamma-6$.

In particular, on the sphere, $g$ will be static in time, and the flow is precisely the harmonic map flow of Eells-Sampson \cite{ES}.
A global existence theorem for weak solutions of that flow from surfaces was given by Struwe \cite{struweCMH}.

On the torus, as we describe in Section \ref{torus_sect}, the flow \eqref{flow} is easier to handle than for higher genus because the flow moves the domain metric $g$ within a two-dimensional smooth submanifold of the infinite-dimensional space $\m_0$ of all unit-area flat metrics. The foundational theory for this case, including the existence theory, was given by Ding-Li-Liu \cite{DLL} from a somewhat different viewpoint.

For genus $\gamma\geq 2$, an extra complication arises. The domain metric $g$ moves now within the space $\m_{-1}$ of hyperbolic metrics, and although its velocity $\pl{g}{t}$ always lies within a $6\gamma-6$ dimensional subspace of the infinite dimensional tangent space $T\m_{-1}$, the distribution defined by these `horizontal' subspaces is no longer integrable. What this means in practice is that the domain metric $g(t)$ at two different times may represent the same point in Teichm\"uller space, but differ by pull-back under some diffeomorphism.

Nevertheless, by drawing on ideas from Teichm\"uller theory, we overcome this difficulty. In \cite{rupflin_existence}, the first author will prove the following existence and uniqueness theorem, effectively extending the theory of Struwe \cite{struweCMH} and Ding-Li-Liu \cite{DLL}:

\begin{thm}\label{thm:1}
\label{existence}
For any initial data $(u_0,g_0)\in  C^\infty(M,N)\times\M$ there exists a (weak) solution $(u,g)$ of \eqref{flow} defined on a maximal interval $[0,T)$, $T\leq \infty$, satisfying the following properties
\begin{enumerate}
\item[(i)] The solution $(u,g)$ is smooth away from at most finitely many singular times $T_i\in(0,T)$. Furthermore as $t\to T_i$ the metrics $g(t)$ converge smoothly to an element $g(T_i)\in \M$ and the maps $u(t)$ converge to $u(T_i)$ weakly in $H^1$ and smoothly away from a finite set of points $S(T_i)$ in $M$. 
\item[(ii)] The energy $t\mapsto E(u(t),g(t))$ is non-increasing. 
\item[(iii)] The solution exists until the metrics degenerate in moduli space; i.e. if the maximal existence time $T<\infty$ then the length $\ell(g(t))$ of the shortest closed geodesic of $(M,g(t))$ converges to zero as 
$t\nearrow T$.
\end{enumerate} 
Furthermore, this solution is uniquely determined by its initial data in the class of all weak solutions with non-increasing energy.
\end{thm}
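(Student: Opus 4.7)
The plan is to prove Theorem \ref{thm:1} in four stages: short-time existence, energy monotonicity, singularity analysis, and uniqueness, broadly following the strategy pioneered by Struwe \cite{struweCMH} for the harmonic map flow and Ding--Li--Liu \cite{DLL} for the torus case, but with substantial new input to handle the non-integrable horizontal distribution on $\m_{-1}$ that appears for $\gamma\geq 2$.

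\textbf{Short-time existence and regularity.} The equation for $u$ is a small perturbation of the classical harmonic map flow, so it is parabolic; the equation for $g$ is an ODE driven by the finite-dimensional quantity $P_g(\Phi(u,g))$. For $\gamma=0$ the $g$-equation is trivial; for $\gamma=1$ the horizontal distribution is integrable, reducing the $g$-flow to an ODE on a two-dimensional smooth submanifold of $\m_0$, which is the framework of \cite{DLL}. For $\gamma\geq 2$ the horizontal distribution fails to be integrable, so I would fix, at the initial metric $g_0$, a basis $\{\Theta_k\}_{k=1}^{6\gamma-6}$ of holomorphic quadratic differentials and a horizontal local slice through $g_0$ parametrising a neighbourhood of $[g_0]$ in Teichm\"uller space; lift the flow to this slice and use a contraction mapping argument in a parabolic H\"older space for $u$ coupled with an ODE for the Teichm\"uller coordinates to obtain a short-time smooth solution. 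Standard bootstrapping gives full regularity.

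\textbf{Energy monotonicity and the extension criterion.} Once a smooth solution exists, the gradient-flow identity \eqref{energy_decay} follows by differentiating $E(u(t),g(t))$ and using that $\pl{g}{t}$ is horizontal. Two mechanisms can obstruct extension: concentration of $\int|du|^2$ at points in $M$, and degeneration of $g$ in moduli space. For the first, following the $\eps$-regularity and concentration arguments of Struwe, one shows that at any candidate singular time $T_i<T$ the map $u(t)$ converges weakly in $H^1$ and smoothly away from a finite set $S(T_i)$, each concentration point carrying at least a fixed $\eps_1>0$ of energy; by energy monotonicity only finitely many such times are possible. Crucially, $\|\pl{g}{t}\|_{L^2}$ is controlled by $\|\Phi(u,g)\|_{L^2}\leq C\,E(u,g)$, so if the injectivity radius stays bounded below then $g(t)$ extends smoothly up to $T_i$ in $\M$, and one restarts the flow from $(u(T_i),g(T_i))$. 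Thus the only way the maximal existence time can be finite is degeneration in moduli space, yielding (iii).

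\textbf{Uniqueness and main obstacle.} For uniqueness among weak solutions with non-increasing energy, I would compare two solutions $(u_1,g_1),(u_2,g_2)$ sharing the same initial data by estimating $\|g_1-g_2\|$ in terms of $\|u_1-u_2\|_{H^1}$ via the ODE structure of the $g$-equation, and then running an energy-difference estimate of Struwe type, where the non-increasing-energy hypothesis is the ingredient that propagates uniqueness across singular times. The most serious obstacle is the high-genus case: because the horizontal distribution on $\m_{-1}$ is non-integrable, a global lift of the Teichm\"uller-space flow back to $\m_{-1}$ may drift along the diffeomorphism orbit, and one must rule out this drift producing either loss of regularity or spurious singularities at the $\m_{-1}$-level. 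Carrying this out rigorously, together with identifying the singular times with genuine moduli-space degenerations, is the technical heart of the argument and is precisely what is handled in \cite{rupflin_existence}.
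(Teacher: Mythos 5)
This theorem is not proved in the paper at all: the text states explicitly that it ``will be proved'' by the first author in the companion paper \cite{rupflin_existence}, and the present paper only \emph{uses} the theorem (together with Definition \ref{def:weak}) as input for the asymptotic analysis of Section \ref{asymp_anal}. So there is no in-paper proof to compare against, and your proposal should be judged as an independent outline. As such it identifies the correct overall strategy -- the one the authors themselves attribute to \cite{rupflin_existence}, namely extending Struwe's theory for the harmonic map flow and Ding--Li--Liu's torus case -- and it correctly isolates the genuinely new difficulty for $\gamma\geq 2$: the horizontal distribution in $\m_{-1}$ is not integrable, so one cannot reduce the metric evolution to an ODE on a fixed finite-dimensional submanifold of metrics as in the genus~1 case, and must instead work with a local slice (as in Appendix \ref{banach_manifold}) or with the $(\tilde u,\Gamma)$ formulation of Appendix \ref{reformulation}. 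But every hard step is gestured at rather than carried out: the contraction argument for the coupled system, the $\eps$-regularity with constants uniform as $g(t)$ varies, the restart of the flow across bubbling times, and above all the uniqueness in the class of weak solutions with non-increasing energy (which, even for the pure harmonic map flow, is a delicate theorem in its own right). Your closing sentence concedes that the technical heart is ``precisely what is handled in \cite{rupflin_existence}'', which is an honest description of the situation but also means the proposal is a plan, not a proof.

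One concrete error worth fixing even at the level of the outline: you claim $\|\pl{g}{t}\|_{L^2}$ is controlled via $\|\Phi(u,g)\|_{L^2}\leq C\,E(u,g)$. This inequality is false: pointwise $|\Phi|\sim|du|^2$, so the energy controls only $\|\Phi\|_{L^1}$, while $\|\Phi\|_{L^2}\sim\|du\|_{L^4}^2$ is not bounded by $E$. The correct route is through the finite-dimensionality of the target of $P_g$: writing $P_g\Phi=\sum_k\langle\Phi,\Theta_k\rangle\,\Theta_k$ for an $L^2$-orthonormal basis $\{\Theta_k\}$ of holomorphic quadratic differentials, one gets $\|P_g\Phi\|_{L^2}\leq C(g)\,\|\Phi\|_{L^1}\leq C(g)\,E(u,g)$ with $C(g)$ controlled by $\sup_k\|\Theta_k\|_{L^\infty}$. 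This constant degenerates exactly when a closed geodesic becomes short, which is why the extension criterion (iii) is phrased in terms of $\ell(g(t))$; your outline should make this dependence explicit rather than suggesting the bound is unconditional.
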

\begin{defn}\label{def:weak}
We call $(u,g)\in H^1_{loc}(M\times[0,T),N)\times C^0([0,T),\M)$ a weak solution of \eqref{flow} on $[0,T)$, 
$T\leq \infty$, provided $u$ solves the first equation of \eqref{flow} in the sense of distributions and $g$ is piecewise $C^1$ 
(viewed as a map from $[0,T)$ into the space of symmetric (0,2) tensors equipped with any $C^k$ metric, $k\in\N$) 
and satisfies the second equation of \eqref{flow} away from times where it is not differentiable.
\end{defn}

\begin{rmk}
As for the harmonic map flow, the singularities of the map component are due to the ``bubbling off of minimal spheres'' as $t\nearrow T_i$ and thus cause an instantaneous loss of energy, compare \cite{rupflin_existence}.
In the case that the genus $\ga<2$, there can be no degeneration
at finite time of the type described in (iii), as shown by Ding-Li-Liu \cite{DLL}, which can be viewed as a consequence of the completeness of 
Teichm\"uller space for tori with respect to the Weil-Petersson metric.
\end{rmk}

In the case that the metrics $g(t)$ do \emph{not} degenerate in moduli space, even at infinite time, we show in this paper that we get good asymptotic convergence, modulo parametrisation, to a branched minimal immersion or a constant map. 
More precisely we prove:

\begin{thm}
\label{asymptotics}
In the setting of Theorem \ref{existence}, if the length $\ell(g(t))$ of the shortest closed geodesic of $(M,g(t))$ is uniformly bounded below by a positive constant, then there exist a sequence of times $t_i\to\infty$, a sequence of orientation-preserving diffeomorphisms $f_i:M\to M$, a hyperbolic metric $\bar g$ on $M$, a weakly conformal harmonic map $\bar u:(M,\bar g)\to N$ and a finite set of points $S\subset M$ such that 
\begin{enumerate}
\item[(i)]
$f_i^*[g(t_i)]\to \bar g$ smoothly;
\item[(ii)]
$u(t_i)\circ f_i \weakto \bar u$ weakly in $H^1(M)$;
\item[(iii)]
$u(t_i)\circ f_i \to \bar u$ strongly in $W^{1,p}_{loc}(M\backslash S)$ for any $p\in [1,\infty)$, and thus also in $C^0_{loc}(M\backslash S)$;
\item[(iv)] the map $\bar u$ has the same action on $\pi_1(M)$ as $u_0$ -- see \eqref{def:action} below.
\end{enumerate}
\end{thm}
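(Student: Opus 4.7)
The plan is to combine the energy decay \eqref{energy_decay} with compactness in moduli space (Mumford) and the standard bubble analysis of Struwe \cite{struweCMH} for sequences of almost-harmonic maps with bounded energy.

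\textbf{Step 1: Selecting a good sequence of times.}
Because the energy $E(u(t),g(t))$ is non-increasing by Theorem \ref{existence}(ii) and bounded below by $0$, \eqref{energy_decay} integrates to
\begin{equation*}
\int_0^\infty \int_M \Bigl(|\tau_{g(t)}(u(t))|^2+\tfrac{\eta^2}{16}|Re(P_{g(t)}(\Phi(u(t),g(t))))|^2\Bigr)\,d\mu_{g(t)}\,dt \le E(u_0,g_0).
\end{equation*}
Hence I can pick a sequence $t_i\to\infty$ such that both $\|\tau_{g(t_i)}(u(t_i))\|_{L^2}\to 0$ and $\|P_{g(t_i)}(\Phi(u(t_i),g(t_i)))\|_{L^2}\to 0$, while $E(u(t_i),g(t_i))$ stays bounded.

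\textbf{Step 2: Extracting the limit metric.}
The assumption that $\ell(g(t))$ is bounded below uniformly means that the hyperbolic surfaces $(M,g(t_i))$ stay in a compact part of moduli space. By Mumford's compactness theorem, there exist orientation-preserving diffeomorphisms $f_i:M\to M$ and a hyperbolic metric $\bar g$ on $M$ such that $f_i^*g(t_i)\to \bar g$ smoothly, giving (i). Set $\tilde g_i:=f_i^*g(t_i)$ and $\tilde u_i:=u(t_i)\circ f_i$. Diffeomorphism invariance of the energy yields $E(\tilde u_i,\tilde g_i)=E(u(t_i),g(t_i))\leq E(u_0,g_0)$, and the tension field and Hopf differential transform covariantly, so
\begin{equation*}
\|\tau_{\tilde g_i}(\tilde u_i)\|_{L^2(\tilde g_i)}\to 0, \qquad \|P_{\tilde g_i}(\Phi(\tilde u_i,\tilde g_i))\|_{L^2(\tilde g_i)}\to 0.
\end{equation*}

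\textbf{Step 3: Bubble analysis and identification of $\bar u$.}
Using the uniform $H^1(M,\bar g)$ bound on $\tilde u_i$ (from the energy bound plus smooth convergence of metrics), I pass to a subsequence with $\tilde u_i\weakto \bar u$ in $H^1$, giving (ii). The $\varepsilon$-regularity results of Sacks-Uhlenbeck/Struwe applied to the approximate harmonic maps $\tilde u_i$ produce a finite set of concentration points $S\subset M$ (where an $\varepsilon_0$-amount of energy concentrates on arbitrarily small balls) and away from $S$ give uniform local $C^{k}$ bounds; bootstrapping from $\|\tau_{\tilde g_i}(\tilde u_i)\|_{L^2}\to 0$ then gives the strong convergence in (iii) and shows that $\bar u$ is smooth and harmonic on $M\setminus S$, hence (by removability of singularities for finite-energy harmonic maps from surfaces) harmonic on all of $M$ with respect to $\bar g$. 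For weak conformality, the Hopf differential $\Phi(\bar u,\bar g)$ of a harmonic map is holomorphic, so $P_{\bar g}(\Phi(\bar u,\bar g))=\Phi(\bar u,\bar g)$. On the other hand, the smooth convergence $\tilde g_i\to\bar g$ together with strong $W^{1,p}_{loc}$ convergence off $S$ and uniform $H^1$ control lets me pass to the limit in the projection, obtaining $P_{\bar g}(\Phi(\bar u,\bar g))=0$, whence $\Phi(\bar u,\bar g)\equiv 0$ and $\bar u$ is weakly conformal.

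\textbf{Step 4: Action on $\pi_1$.}
For (iv), the key point is that the flow \eqref{flow} preserves the homotopy class of the map on $M$ throughout the smooth intervals, and across singular times $T_i$ only finitely many two-spheres bubble off, which by Sacks-Uhlenbeck produce elements of $\pi_2(N)$ but leave the induced map on $\pi_1(M)$ unchanged. Thus each $u(t_i)$ acts on $\pi_1(M)$ in the same way as $u_0$; composing with the orientation-preserving diffeomorphism $f_i$ twists by $(f_i)_*$, but the subsequent $H^1$ convergence on the complement of the finite set $S$ (Step 3) is enough -- once I verify that the loops representing generators of $\pi_1(M)$ can be chosen to avoid $S$, which is possible since $S$ is finite -- to identify $(\bar u)_*=(u(t_i)\circ f_i)_*$ on $\pi_1$ for large $i$. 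This is the statement \eqref{def:action}.

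The main obstacle is Step 3: making the passage to the limit for the Hopf-differential projection $P_{\tilde g_i}$ rigorous, given that $P$ is a nonlocal (projection) operator that depends on the metric. This requires the smooth convergence $\tilde g_i\to\bar g$ (so that the finite-dimensional spaces of holomorphic quadratic differentials converge, basis by basis) together with the non-degeneracy of moduli space ensured by the systole bound. The bubbling at $S$ is the other delicate point, but is by now standard following Struwe \cite{struweCMH}.
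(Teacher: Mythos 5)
Your Steps 1 and 2 match the paper. The genuine problem is in Step 3, in the way you try to establish weak conformality, and there is a secondary gap in Step 4.

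In Step 3 you argue: $\bar u$ is harmonic, so $\Phi(\bar u,\bar g)$ is holomorphic, hence $\Phi(\bar u,\bar g)=P_{\bar g}(\Phi(\bar u,\bar g))$; and you then claim you can pass to the limit in the projection to get $P_{\bar g}(\Phi(\bar u,\bar g))=\lim_i P_{\tilde g_i}(\Phi(\tilde u_i,\tilde g_i))=0$. You identify the metric-dependence of $P$ as the main obstacle, but that part is actually harmless (the spaces of holomorphic quadratic differentials are finite-dimensional and converge with $\tilde g_i\to\bar g$, as the paper notes in the proof of Lemma \ref{Ptypelemma}). The real obstruction is the concentration set $S$: computing $P_{\tilde g_i}(\Phi(\tilde u_i,\tilde g_i))$ requires pairing $\Phi(\tilde u_i,\tilde g_i)$, which is quadratic in $d\tilde u_i$, against holomorphic quadratic differentials over \emph{all} of $M$. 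Near $S$ you only have weak $H^1$ convergence and energy of size at least $\eps_0$ concentrates there, so $\int_M\langle\Phi(\tilde u_i,\tilde g_i),\Theta_i\rangle$ need not converge to $\int_M\langle\Phi(\bar u,\bar g),\Theta\rangle$; a defect can survive at the bubble points. Controlling that defect would require a neck/energy-identity type analysis, which you have not supplied. The paper avoids this entirely with a different mechanism: the identity $\phi_{\zbar}=2\langle\rho^2\tau_g(u),u_z\rangle$ gives $\|\dbar\Phi(u_j,g_j)\|_{L^1}\leq\sqrt2\,\|\tau_{g_j}(u_j)\|_{L^2}E^{1/2}\to0$, and the elliptic-Poincar\'e inequality of Lemma \ref{Ptypelemma} (whose proof occupies Section \ref{poincare} and uses the systole lower bound) then upgrades $\|P_{g_j}\Phi\|_{L^2}\to0$ to $\|\Phi(u_j,g_j)\|_{L^1}\to0$ \emph{before} any limit is taken. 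Weak conformality of $\bar u$ then follows pointwise from the local strong $W^{1,p}$ convergence off $S$, with no global pairing needed. Without this lemma (or an equivalent substitute), knowing only that the holomorphic \emph{part} of the Hopf differential tends to zero tells you nothing about the rest of it, and your argument does not close.

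In Step 4, your argument yields $(\bar u)_*\sim(u_0)_*\circ(f_i)_*$, not $(\bar u)_*\sim(u_0)_*$: the Mumford diffeomorphisms $f_i$ are in general not homotopic to the identity, so the twist by $(f_i)_*$ does not disappear. The paper repairs this by replacing $f_i$ with $\tilde f_i:=f_i\circ f_{i_0}^{-1}$ (so $\tilde f_{i_0}=\mathrm{id}$) and the limit map with $\bar u\circ f_{i_0}^{-1}$, for which the desired identity $(\bar u\circ f_{i_0}^{-1})_*=(u_0)_*$ does hold. You need this (or an equivalent renormalisation) to obtain conclusion (iv) as stated.
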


At each point in $S$ where the convergence to $\bar u$ fails to be smooth, there will be bubbling of one or more minimal spheres (cf. \cite{struwe_book}).

Also in this paper we demonstrate how this flow, having such asymptotics, can be used to prove the following minimal surface existence theorem of Schoen-Yau \cite{SY} and Sacks-Uhlenbeck \cite{SU_TAMS}. The genus 1 version was already addressed in \cite{DLL}.

\begin{thm}
\label{minsurfexist}
Let $M$ be a smooth closed orientable surface, $N=(N,G)$ a smooth closed Riemannian manifold and $u:M\to N$ an incompressible map.
Then there exists a branched minimal immersion $\bar u:M\to N$ (with respect to some conformal structure on $M$) with the same action on $\pi_1$ as $u$. Moreover, $\bar u$ can be chosen to be homotopic to $u$ in the case that $\pi_2(N)=0$.
\end{thm}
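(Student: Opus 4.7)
My plan is to run the flow \eqref{flow} from initial data $(u,g_0)$, with $g_0$ an arbitrary constant curvature metric on $M$, and extract the desired branched minimal immersion as a subsequential asymptotic limit via Theorems \ref{existence} and \ref{asymptotics}. The genus $0$ case is vacuous since $\pi_1(S^2)=0$ (a constant map suffices), and the genus $1$ case is exactly the content of \cite{DLL}, so the real work is when $\gamma\geq 2$.

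\textbf{No degeneration in moduli.} The main obstacle is to show that the length $\ell(g(t))$ of the shortest closed geodesic of $(M,g(t))$ stays uniformly bounded away from zero, for then $T=\infty$ by Theorem \ref{existence}(iii) and the hypotheses of Theorem \ref{asymptotics} are met. Suppose to the contrary that $\ell(g(t_n))\to 0$ along a sequence $t_n$. By the Collar Lemma, $(M,g(t_n))$ contains a conformal collar around an essential simple closed geodesic $\gamma_n$ whose modulus tends to infinity. Along the flow the only loss of topology occurs at the finitely many singular times where minimal $2$-spheres bubble off; these are $\pi_1$-trivial in $N$, so the induced homomorphism $(u(t))_\ast:\pi_1(M)\to\pi_1(N)$ is independent of $t$. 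Incompressibility of $u$ then forces $u(t_n)\circ\gamma_n$ to be noncontractible in $N$, hence of length at least the systole $\delta_N>0$ of the compact target $N$. A standard length--energy (Cauchy--Schwarz) inequality on the collar now yields
\[E(u(t_n),g(t_n))\;\geq\; c\,\delta_N^{2}\cdot\mathrm{mod}(\text{collar}_n)\;\longrightarrow\;\infty,\]
contradicting the monotonicity \eqref{energy_decay}.

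\textbf{Extracting the minimal immersion.} With $\ell(g(t))$ uniformly bounded below, Theorem \ref{asymptotics} supplies $t_i\to\infty$, diffeomorphisms $f_i$, a hyperbolic metric $\bar g$, a finite set $S\subset M$, and a weakly conformal harmonic map $\bar u:(M,\bar g)\to N$ with the same action on $\pi_1(M)$ as $u$. Since $\gamma\geq 1$ gives $\pi_1(M)\neq 0$, incompressibility implies $\bar u_\ast=u_\ast$ is nontrivial, so $\bar u$ is nonconstant; the classical result \cite{GOR} then identifies $\bar u$ as a branched minimal immersion, proving the first conclusion of the theorem.

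\textbf{The homotopy statement.} Suppose $\pi_2(N)=0$. Then every bubble lost along the flow is a null-homotopic minimal $2$-sphere, so $u(t)$ remains in a single free-homotopy class throughout the flow. Since $u(t_i)\circ f_i\to\bar u$ smoothly away from the finite set $S$ and the bubbles concentrating at $S$ are null-homotopic in $N$, $\bar u$ is homotopic to $u(t_i)\circ f_i$, and hence to $u\circ f_i$. Replacing $\bar u$ by the branched minimal immersion $\bar u\circ f_i^{-1}:(M,(f_i^{-1})^\ast\bar g)\to N$ therefore yields a map homotopic to $u$, completing the proof.
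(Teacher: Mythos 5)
Your genus $\geq 2$ argument is essentially the paper's own: the non-degeneration step is exactly Lemma \ref{lemma:incomp}, proved there via the Keen--Randol collar lemma (Lemma \ref{lemma:collar}) and the conformal invariance of the Dirichlet energy, which gives $E(u,g)\geq \frac{c^2}{2\pi}X(\ell)\to\infty$ as $\ell\to 0$ for incompressible maps; combined with the fact (established in the proof of Proposition \ref{asymp_prop}) that the flow preserves the action on $\pi_1$, this rules out collapse, and the extraction of the branched minimal immersion and the homotopy statement for $\pi_2(N)=0$ then follow from Theorem \ref{asymptotics} and the remarks after Proposition \ref{asymp_prop}, just as you argue.

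The one genuine slip is your dismissal of the genus $0$ case as ``vacuous, a constant map suffices''. An incompressible map is by definition homotopically nontrivial, so for $M=S^2$ the hypothesis is that $u$ represents a nontrivial class in $\pi_2(N)$, and the conclusion demands a \emph{branched minimal immersion}, which in this paper means a \emph{nonconstant} weakly conformal harmonic map; a constant map does not qualify. This case is the Sacks--Uhlenbeck existence theorem for minimal $2$-spheres, reached here via the harmonic map flow (to which \eqref{flow} reduces when $\gamma=0$), which is how the paper disposes of it.
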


\begin{rmk}
To clarify, a map $u:M\to N$ is called \emph{incompressible} if it is continuous, homotopically nontrivial and its action on $\pi_1$ has trivial kernel. 
In fact, even if 
we only assume that the images of all \emph{simple} closed non-homotopically trivial curves are again non-homotopically trivial,
we can still find the branched minimal immersion $\bar u$ with the same action on $\pi_1$. 
Two maps $u^1:M\to N$ and $u^2:M\to N$ are said to have the same action on $\pi_1$ if for a fixed $x\in M$, the natural actions
$$u^1_*:\pi_1(M,x)\to\pi_1(N,u^1(x))\text{ and }
u^2_*:\pi_1(M,x)\to\pi_1(N,u^2(x))$$
are equivalent up to a change-of-basepoint isomorphism 
$\alpha_*:\pi_1(N,u^2(x))\to \pi_1(N,u^1(x))$ induced by some path 
$\alpha:[0,1]\to N$ from $u^1(x)$ to $u^2(x)$, i.e. if 
\beq \label{def:action} 
\alpha_*(u_*^1([\gamma])):=\alpha^{-1}\cdot u_*^1([\gamma])\cdot \alpha=u_* ^2([\gamma]) \eeq
for every $[\gamma]\in \pi_1(M,x)$. 
In that case we write $u^1_*\sim u^2_*$.
In particular, the images of an arbitrary closed curve in $M$ under $u^1$ and $u^2$ will be homotopic. 
For the purposes of this paper, a \emph{branched minimal immersion} is defined to be a nonconstant weakly conformal harmonic map. Such a map can be seen to be a smooth minimal immersion away from finitely many points in the domain where branching occurs \cite{GOR}.
\end{rmk}

\begin{rmk}
In \cite{SY} and \cite{SU_TAMS}, the authors obtain the existence of such a branched minimal surface which \emph{minimises} area over all maps with this action on $\pi_1$. In contrast, our method gives an essentially explicit deformation of the map (particularly when the target does not admit any minimal 2-spheres).
\end{rmk}

Some ideas from the proof of Theorems \ref{asymptotics} and \ref{minsurfexist} will be outlined now, and detailed in Section \ref{asymp_anal}.
The starting point is the flow from Theorem \ref{existence} which will be global under the hypotheses of Theorem \ref{minsurfexist}:
We will show that the conformal structure of the domain cannot degenerate in this case because if it did, the incompressibility of the map $u$ would force the energy to blow up rather than monotonically decrease.
It will then turn out that the flow 
deforms the map $u$ without altering its action on $\pi_1$, even as we pass over singular times.

By integrating the energy decay formula \eqref{energy_decay},
we see that 
\begin{equation}
\label{energy_decay_integrated}
\int_0^\infty\left(\int_M |\tau_g(u)|^2+\left(\frac{\eta}{4}\right)^2 |Re(P_g(\Phi(u,g)))|^2
\right)dt\leq E(u(0))-\lim_{t\to\infty}E(u(t))<\infty,
\end{equation}
and thus, keeping in mind \eqref{real_complex}, it is possible to pick times $t_i\to\infty$ such that 
$$\tau_{g(t_i)}(u(t_i))\to 0\text{ in }L^2\qquad\text{ and }\qquad P_{g(t_i)}(\Phi(u(t_i),g(t_i)))\to 0 \text{ in }L^2.$$
Standard estimates for the Hopf differential (see \eqref{hopfest1}) will tell us that $\dbar [\Phi(u(t_i),g(t_i))]\to 0$ in $L^1$. Putting this information into an elliptic-Poincar\'e estimate (Lemma \ref{Ptypelemma}) will then imply that $\Phi(u(t_i),g(t_i))\to 0$ in $L^1$.

A modified bubbling analysis together with Mumford's compactness theorem then tells us that after passing to a subsequence and pulling back by a sequence of orientation-preserving diffeomorphisms, the metrics $g(t_i)$ will converge to a limiting hyperbolic metric and the maps $u(t_i)$ converge to a finite energy conformal harmonic map $u_\infty$ (with respect to this limiting metric) away from a finite number of points in $M$.
After applying Sacks-Uhlenbeck's removable singularity theorem, we will see that $u_\infty$ is the branched minimal immersion that we seek.

This paper is organised as follows. In Section \ref{poincare} we state and prove an elliptic estimate for quadratic differentials which will be applied later to the Hopf differential. In Section \ref{asymp_anal} we describe the asymptotic behaviour of our flow in the noncollapsing case that the length of the shortest closed geodesic in the domain remains bounded below by some uniform positive constant.
In Section \ref{SYpf} we show that when the initial map is incompressible, then we can be sure that the flow will collapse neither at finite nor infinite time.
In Appendix \ref{reformulation} we explain how the flow can be reformulated essentially as a flow of pairs $(u,\Gamma)$ where $\Gamma$ is a group of isometries of the universal cover of the domain, and $u$ is a map from that universal cover which is invariant under the action of $\Gamma$. This viewpoint allows us in particular to connect our work with earlier work of Ding-Li-Liu \cite{DLL} in the genus 1 case.

Since this paper first appeared in 2012, there have been a number of developments concerning the flow we introduce here. 
In \cite{RTZ}, the authors together with Miaomiao Zhu gave a description of the asymptotics of the flow in the case that 
the domain surface degenerates at infinite time,
and this was refined together with Tobias Huxol in \cite{HRT}. 
The simplest way of developing this theory is via a new Poincar\'e inequality for quadratic differentials that is proved 
in \cite{RT-2}, which extends Lemma 2.1 to arbitrarily degenerate surfaces. 
The asymptotic analysis in \cite{RTZ} assumes that the domain surface does not degenerate in finite time, and this hypothesis is established in 
\cite{THMF_negcurv} in the case that the target $(N,G)$ has nonpositive sectional curvature.

\emph{Acknowledgements:} Thanks to Mario Micallef for his insightful remarks, Sebastian Helmensdorfer for discussions about \cite{DLL}, and Miaomiao Zhu and Saul Schleimer for comments. Both authors were supported by The Leverhulme Trust.

\section{An elliptic-Poincar\'e inequality for quadratic differentials}
\label{poincare}

In this section, we will prove that quadratic differentials which are almost holomorphic in the sense that the $L^1$ norm of their anti-holomorphic derivative is small, are small themselves in $L^1$, modulo any holomorphic part. In this paper we will apply the result to the Hopf differential of the flow map, at appropriate large times.

\begin{lemma}
\label{Ptypelemma}
Given an integer $\gamma \geq 1$ and real number $l>0$, there exists $C<\infty$ such that the following holds.
Suppose $M$ is the smooth closed orientable surface of genus $\gamma $, and $g$ is a metric of constant curvature $-1$ (for $\gamma \geq 2$) or $0$ (for $\gamma =1$) having unit area in the case that $\gamma =1$.  
Suppose further that the lengths of all simple closed geodesics on $(M,g)$ are bounded below by $l$. Then for any 
$C^1$ quadratic differential $\Psi=\psi dz^2$ on $M$, $z$ a complex variable compatible with $g$, we have
$$\|\psi dz^2-P_g(\psi dz^2)\|_{L^1}\leq C\|\dbar (\psi dz^2)\|_{L^1}.$$
\end{lemma}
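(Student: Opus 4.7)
The plan is to argue by contradiction and compactness. Suppose the estimate fails: there exist metrics $g_n$ on $M$ satisfying the hypotheses and $C^1$ quadratic differentials $\Psi_n=\psi_n dz^2$ with $\|\Psi_n-P_{g_n}\Psi_n\|_{L^1}=1$ but $\|\bar\partial\Psi_n\|_{L^1}\to 0$. Replacing $\Psi_n$ by $\Psi_n-P_{g_n}\Psi_n$, I may assume $P_{g_n}\Psi_n=0$. Since $\ell(g_n)\geq l$ uniformly, Mumford's compactness theorem for $\M$ (together with the analogous statement for flat unit-area tori when $\gamma=1$) yields orientation-preserving diffeomorphisms $f_n:M\to M$ with $f_n^*g_n\to g_\infty$ smoothly for some hyperbolic or flat metric $g_\infty$. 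Because the $L^1$ norm, the operator $\bar\partial$, and the $L^2$-projection $P_g$ are all equivariant under diffeomorphisms, replacing $(g_n,\Psi_n)$ by $(f_n^*g_n,f_n^*\Psi_n)$ preserves the normalizations, and I may assume $g_n\to g_\infty$ smoothly on $M$.

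The next step is to upgrade the uniform $L^1$ control on $\psi_n$ to a uniform $L^p_{loc}$ bound for some $p>1$. Fix a finite cover of $M$ by coordinate disks $U_\alpha$ that are conformal for $g_\infty$; for $n$ large they are conformal (up to a small perturbation) for $g_n$ as well. On each chart, the Cauchy transform $\mathcal{T}f(z):=\frac{1}{\pi}\int_{U_\alpha}\frac{f(w)}{z-w}dA(w)$ satisfies $\partial_{\bar z}\mathcal{T}f=f$, and I write $\psi_n=\mathcal{T}(\partial_{\bar z}\psi_n)+h_n^\alpha$ with $h_n^\alpha$ holomorphic on $U_\alpha$. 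The kernel $\tfrac{1}{\pi z}$ lies in $L^p(U_\alpha)$ for every $p\in[1,2)$, so Young's convolution inequality gives $\|\mathcal{T}f\|_{L^p(U_\alpha)}\leq C_p\|f\|_{L^1(U_\alpha)}$ in that range. Hence $\mathcal{T}(\partial_{\bar z}\psi_n)\to 0$ in $L^p(U_\alpha)$, while the holomorphic parts $h_n^\alpha$ have uniformly bounded $L^1$ norms and are therefore locally uniformly bounded by the mean-value property.

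By Montel's theorem and a diagonal argument over the finite atlas, a subsequence of $\psi_n$ converges locally uniformly off a set of measure zero to a quadratic differential $\Psi_\infty=\psi_\infty dz^2$ on $(M,g_\infty)$. Since $\bar\partial\psi_n\to 0$ distributionally, $\Psi_\infty$ is holomorphic; since $g_n\to g_\infty$ smoothly, the orthogonality $P_{g_n}\Psi_n=0$ passes to the limit, so $\Psi_\infty$ is $L^2(g_\infty)$-orthogonal to every holomorphic quadratic differential, including itself, forcing $\Psi_\infty=0$. Combining the uniform $L^p_{loc}$ bound ($p>1$) with the a.e.\ convergence, Vitali's theorem then gives $\psi_n\to 0$ in $L^1(M)$, contradicting $\|\Psi_n\|_{L^1}=1$.

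The main obstacle is the endpoint failure of Calderon--Zygmund: the natural estimate $\|\psi-P_g\psi\|_{L^p}\leq C\|\bar\partial\psi\|_{L^p}$ holds cleanly for $1<p<\infty$ but breaks at $p=1$, so a one-shot local elliptic argument is unavailable. The compactness detour above is designed to sidestep this by exploiting the fact that the Cauchy kernel is just barely not in $L^2$, which is enough to trade $L^1$ control of $\bar\partial\psi$ for strictly-better-than-$L^1$ integrability of $\psi$ — exactly the slack needed to convert a.e.\ convergence into the $L^1$ convergence that drives the contradiction.
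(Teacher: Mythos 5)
Your proposal is correct, and its global architecture is the same as the paper's: argue by contradiction, normalise so that $P_{g_n}\Psi_n=0$ and $\|\Psi_n\|_{L^1}=1$ with $\|\dbar\Psi_n\|_{L^1}\to 0$, invoke Mumford compactness to make the metrics converge, extract a holomorphic limit $\Psi_\infty$ orthogonal to all holomorphic quadratic differentials (hence zero), and contradict the unit-norm normalisation. Where you genuinely diverge is in the local mechanism that converts $L^1$ control of $\dbar\psi_n$ into compactness. The paper works on the universal cover $\Hyp$ (so that a single coordinate is isothermal for every $g_n$ simultaneously) and proves a bespoke compactness lemma via Cauchy's formula: $\|\psi-\psi^\ep\|_{L^1(D_{1/2})}\leq C\ep\|\psi_\zbar\|_{L^1(D)}$, which makes the sequence precompact in $L^1_{loc}$ outright, so the limit inherits $\|\Psi_\infty\|_{L^1}=1$ directly. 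You instead split $\psi_n=\mathcal{T}(\partial_{\zbar}\psi_n)+h_n$ on charts, use the subcritical integrability of the Cauchy kernel (Young's inequality, $p<2$) to kill the first piece and Montel for the second, and then need Vitali to upgrade a.e.\ convergence plus the uniform $L^p_{loc}$ bound back to the $L^1$ convergence that produces the contradiction. Both routes exploit exactly the same analytic fact (the kernel $1/z$ is better than $L^1$ but worse than $L^2$); yours buys the extra $L^p_{loc}$ bound for $p\in(1,2)$, consistent with the paper's remark that the left-hand norm could be taken in $L^q$ for $q<2$, at the cost of the Vitali step. Two points in your writeup deserve a touch more care: (i) the charts should be taken as $g_n$-isothermal coordinates converging smoothly to $g_\infty$-isothermal ones (the paper's ``normalised isothermal coordinates''), rather than fixed $g_\infty$-conformal charts that are only approximately conformal for $g_n$, since $\dbar$ acting on quadratic differentials is tied to the conformal structure; and (ii) the assertion that $P_{g_n}\Psi_n=0$ passes to the limit needs the observation, spelled out in the paper, that orthonormal bases of holomorphic quadratic differentials for $g_n$ converge to one for $g_\infty$. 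Neither is a gap in substance.
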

In the above lemma, we have $\dbar (\psi dz^2)=\psi_\zbar d\zbar\tensor dz^2$, where if $z=x+iy$ then 
$\psi_\zbar:=\half\left(\pl{\psi}{x}+i\pl{\psi}{y}\right)$.

We will only prove this lemma in the case $\gamma \geq 2$ that concerns us, but the same ideas yield a proof for $\gamma =1$, and such an estimate appears to be necessary in the proof of 
Theorem 4.1 from \cite{DLL}.

\begin{rmk}
With a little more work, one could take the $L^q$ norm on the left-hand side, for any $q\in [1,2)$, but we will not need this improvement. In contrast with many other situations, we will show in future work that the constant $C$ in Lemma \ref{Ptypelemma} need not depend on $l$.
\end{rmk}

We will prove the Lemma \ref{Ptypelemma} using the following compactness result.

\begin{lemma}
\label{localrellichtypelemma}
Suppose $\psi_i:D\to\C$ is a sequence of $C^1$ functions on the unit disc $D$ in $\C$, such that
$$\|\psi_i \|_{L^1}+\|(\psi_i)_\zbar\|_{L^1}\leq C.$$
Then there exists $\psi_\infty\in L^1(D_\half,\C)$ such that after passing to a subsequence, we have
$$\psi_i \to \psi_\infty$$
in $L^1(D_\half,\C)$. 
Moreover, if $\|(\psi_i)_\zbar\|_{L^1}\to 0$, then 
$\psi_\infty$ is holomorphic.
\end{lemma}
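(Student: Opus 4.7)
The plan is to apply the Cauchy--Pompeiu formula to decompose $\psi_i$ on $D_\half$ as a holomorphic piece plus a Cauchy transform of $(\psi_i)_\zbar$, and to treat these two pieces separately. Fix a cutoff $\chi\in C^\infty_c(D)$ with $\chi\equiv 1$ on the subdisc $D_{3/4}$ of radius $3/4$. Applying Cauchy--Pompeiu to $\chi\psi_i$ and restricting to $z\in D_\half$, where $\chi(z)=1$, yields $\psi_i(z)=w_i(z)+v_i(z)$ with
$$w_i(z):=-\frac{1}{\pi}\iint_D\frac{\chi_{\bar w}(w)\,\psi_i(w)}{w-z}\,dA(w),\qquad v_i(z):=-\frac{1}{\pi}\iint_D\frac{\chi(w)\,(\psi_i)_{\bar w}(w)}{w-z}\,dA(w).$$
Because $\chi_{\bar w}$ vanishes on $D_{3/4}$, the kernel defining $w_i$ is uniformly smooth as $z$ ranges over $D_\half$, so $w_i$ is holomorphic on $D_\half$ and $|w_i|+|\grad w_i|\leqs C\|\psi_i\|_{L^1}$ there.

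For the holomorphic part, these uniform pointwise and derivative bounds allow Cauchy's formula (or Montel's theorem) to extract a subsequence converging smoothly on $D_\half$ to a holomorphic limit $w_\infty$.

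For the Cauchy transform piece $v_i$, the strategy is to verify the hypotheses of the Kolmogorov--Riesz $L^1$-compactness theorem on $D_\half$. Equi-integrability comes from Young's convolution inequality: since $1/|z|$ lies in $L^q$ on any bounded set for every $q<2$, one obtains $\|v_i\|_{L^q(D_\half)}\leqs C\|(\psi_i)_\zbar\|_{L^1}\leqs C$ for a fixed $q\in(1,2)$. Equi-continuity in $L^1$ comes from the elementary kernel estimate
$$\iint_{D_\half}\left|\frac{1}{w-z-h}-\frac{1}{w-z}\right|dA(z)=|h|\iint_{D_\half}\frac{dA(z)}{|w-z-h|\,|w-z|}\leqs C|h|\log(2/|h|),$$
valid uniformly in $w\in D$ for small $|h|$, which via Fubini gives $\|v_i(\cdot+h)-v_i\|_{L^1(D_\half)}\leqs C|h|\log(2/|h|)\|(\psi_i)_\zbar\|_{L^1}\to 0$ as $|h|\to 0$ uniformly in $i$. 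Kolmogorov--Riesz then produces a subsequence along which $v_i\to v_\infty$ in $L^1(D_\half)$.

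Combining the two pieces yields $\psi_i\to\psi_\infty:=w_\infty+v_\infty$ in $L^1(D_\half)$. If moreover $\|(\psi_i)_\zbar\|_{L^1}\to 0$, then the Young estimate above forces $\|v_i\|_{L^q(D_\half)}\to 0$ for the same $q\in(1,2)$, hence $v_i\to 0$ in $L^1(D_\half)$, and the limit $\psi_\infty=w_\infty$ is holomorphic, as required. The main obstacle is precisely the strong $L^1$-compactness of the Cauchy transform: standard singular-integral theory only delivers $L^p\to L^p$ bounds for $p>1$, and the borderline case $p=1$ must be handled by the Young gain into $L^q$ for some $q<2$ combined with the logarithmic modulus of continuity of the kernel, which together are exactly what feed Kolmogorov--Riesz.
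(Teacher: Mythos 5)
Your proof is correct, but it follows a genuinely different route from the paper's. Both arguments start from the Cauchy--Pompeiu formula, but the paper uses it to show that $\psi$ is $L^1$-close to its $\ep$-mollification, with error $C\ep\|\psi_\zbar\|_{L^1(D)}$; compactness then follows because, for each fixed $\ep$, the mollified family is bounded in $W^{1,1}(D_{1/2})$ and hence precompact in $L^1(D_{1/2})$, and holomorphicity of the limit is read off from the identity $\psi_\infty=(\psi_\infty)^\ep$ together with a weak $\dbar$ computation against test functions. You instead split $\psi_i$ on $D_{1/2}$ into a genuinely holomorphic piece (the Cauchy transform of $\chi_{\bar w}\psi_i$, whose kernel is smooth on $D_{1/2}$ since $\chi_{\bar w}$ is supported away from $D_{3/4}$) handled by Montel, plus the Cauchy transform of $\chi(\psi_i)_{\bar w}$ handled by Kolmogorov--Riesz, with equi-integrability from the Young gain into $L^q$, $q<2$, and $L^1$-equicontinuity from the logarithmic translation estimate on the kernel. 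Your approach invokes heavier standard machinery but is more quantitative: the bound $\|v_i\|_{L^q(D_{1/2})}\leqs C\|(\psi_i)_\zbar\|_{L^1}$ for $q\in(1,2)$ isolates the entire non-holomorphic part in a single term that vanishes with $\|(\psi_i)_\zbar\|_{L^1}$, which makes the second assertion of the lemma immediate and is essentially the estimate one would need for the $L^q$-strengthening of Lemma \ref{Ptypelemma} mentioned in the remark following it; the paper's mollification argument is more elementary and self-contained. (Two small points worth attending to if you write this up in full: justify uniform convergence of $w_i$ up to $\partial D_{1/2}$ by noting the gradient bound holds on the larger disc $D_{3/4}$, and handle the usual boundary technicality in Kolmogorov--Riesz on the bounded domain $D_{1/2}$, e.g.\ by extending by zero and using the uniform $L^q$ bound to control the symmetric-difference term.)
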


\begin{proof} (Lemma \ref{localrellichtypelemma}.)
Whenever $0<r<\ep$ and $\psi\in C^1(D_\ep,\C)$, Cauchy's formula tells us
$$\psi(0)=\frac{1}{2\pi i}\int_{\partial D_r}\frac{\psi}{z}dz
+ \frac{1}{2\pi i}\int_{D_r}\frac{\psi_\zbar}{z}
dz\wedge d\zbar,$$
and therefore
$$\left|\psi(0)-\avint_{\partial D_r}\psi \right|
\leq C\int_{D_\ep}\frac{|\psi_\zbar|}{|z|}.$$
Denoting the $\ep$-mollification of $\psi$ by $\psi^\ep$, computed with respect to a smoothing function which is radially symmetric and supported sufficiently locally, we then have
$$\left|\psi(0)-\psi^\ep(0) \right|
\leq C\int_{D_\ep}\frac{|\psi_\zbar|}{|z|},$$
and more generally, for $\psi\in C^1(D,\C)$, $w\in D_\half$
and $\ep\in (0,\half]$,
$$\left|\psi(w)-\psi^\ep(w) \right|
\leq C\int_{D_\ep(w)}\frac{|\psi_\zbar|(z)}{|z-w|},$$
which integrates to give
\begin{equation}
\label{moll_est}
\|\psi-\psi^\ep\|_{L^1(D_\half)}\leq
C\ep \|\psi_\zbar\|_{L^1(D)}.
\end{equation}
Such a uniform estimate is strong enough to yield the desired compactness, because for each (fixed) $\eps\in(0,\frac12)$ the $\ep$-mollification of the unit ball in $L^1(D_1)$ is bounded in $W^{1,1}(D_{1/2})$ and thus precompact in $L^1(D_{1/2})$. 

Finally, suppose we have $\psi_i\in C^1(D,\C)$ such that 
$\psi_i\to\psi_\infty\in L^1(D_\half,\C)$ and 
$\|(\psi_i)_\zbar\|_{L^1}\to 0$ as $i\to\infty$.
Applying \eqref{moll_est} to $\psi_i$, and noting that
$(\psi_i)^\ep\to(\psi_\infty)^\ep$ in $L^1(D_\half)$ as $i\to\infty$, 
we find that the limiting $\psi_\infty$ agrees with its mollification:
$$\psi_\infty=(\psi_\infty)^\ep,$$
and is thus smooth. It must then also be holomorphic because
for all $\varphi\in C_c^\infty(D_\half)$
$$\int (\psi_\infty)_\zbar \varphi=-\int\psi_\infty\varphi_\zbar
=\lim_{i\to\infty}\left[-\int\psi_i\varphi_\zbar\right]
=\lim_{i\to\infty}\int(\psi_i)_\zbar\varphi =0.$$
\end{proof}

\begin{proof} (Lemma \ref{Ptypelemma}.)
First note that without loss of generality, we may make the additional assumption that $P_g(\psi dz^2)=0$.
Suppose that for some $\gamma $ and $l$, the claimed estimate is false however large we take $C$. Then on the smooth closed surface $M$ of genus $\gamma $, there exist  a sequence $g_i$ of hyperbolic metrics with no geodesic loop shorter than $l$, and a sequence of $C^1$ quadratic differentials $\Psi_i=\psi_i dz_i^2$ with $P_{g_i}(\Psi_i)=0$ 
such that $\|\Psi_i\|_{L^1}=1$, but 
$\|\dbar (\Psi_i)\|_{L^1}\to 0$. 
We may view the surfaces $(M,g_i)$ conformally as
$\Hyp \slash T_i$ for some discrete, fixed-point-free subgroups of $\PSL(2,\R)$ acting as isometries on the upper half-plane $\Hyp$ with the hyperbolic metric, 
and by Mumford's compactness theorem \cite{mumford}
we may pass to a subsequence so that $T_i\to T$ with $\Hyp / T$ another closed genus $\gamma $ hyperbolic surface. 
Here the convergence is to be understood as follows: For any open set $U$
 in $PSL(2,\R)$ containing an element of $T$ the intersection $T_i\cap U$ will be non-empty for $i$ large enough, while any compact subset of $PSL(2,\R)$ disjoint from $T$ will also be disjoint from $T_i$ for $i$ sufficiently large.
In particular, for fixed $z_0\in \Hyp$, the metric fundamental polygons
$$\Sigma(z_0,T_i):=\{z\in\Hyp\ |\ d(z,z_0)\leq d(z,\sigma z_0)\text{ for all }\sigma\in T_i\}$$
converge to $\Sigma(z_0,T)$ in the Hausdorff distance.

Lifting the quadratic differentials $\Psi_i=\psi_i dz_i^2$ 
to the whole of $\Hyp$ and applying Lemma \ref{localrellichtypelemma} over arbitrary  discs $D\subset\subset\Hyp$, we see that we may pass to a convergent subsequence and find a holomorphic quadratic differential $\Psi_\infty=\psi_\infty dz^2$ on $\Hyp / T$ which is locally the $L^1$-limit of $\Psi_i$ after lifting to $\Hyp$, and in particular with 
$\|\Psi_\infty \|_{L^1}=1$. 
In fact, any sequence $\Th_i$ of \emph{holomorphic} quadratic differentials on $\Hyp \slash T_i$  with unit $L^2$ length 
must subconverge to a holomorphic quadratic differential on the limit $\Hyp / T$  (say, smoothly locally after lifting to $\Hyp$) also with unit $L^2$ length, 
and so a sequence of orthonormal bases for the vector spaces of 
holomorphic quadratic differentials on $\Hyp \slash T_i$ will 
converge to an orthonormal basis of such differentials on
$\Hyp / T$. In particular,
given an arbitrary holomorphic quadratic differential $\Th$ on 
$\Hyp / T$, we can find a sequence $\Th_i$ of
holomorphic quadratic differentials on $\Hyp / T_i$ 
such that $\Th_i\to\Th$ smoothly locally after lifting to $\Hyp$. Thus we find that $P_g(\Psi_\infty)=0$ 
because
$$\int_{\Hyp / T}\langle \Psi_\infty,\Th\rangle
=\lim_{i\to\infty}
\int_{\Hyp / T_i}\langle \Psi_i,\Th_i\rangle=0.$$
But $\Psi_\infty$ is holomorphic
and hence $\Psi_\infty=0$, which 
contradicts the fact that the $L^1$ norm of $\Psi_\infty$ is one.
\end{proof}

Of course, although we will not need it, the same proof ideas allow one to prove variants on surfaces of nonconstant curvature such as:
\begin{lemma}
\label{Ptypelemma_alt1}
Suppose $(M,g)$ is any closed orientable surface (which we can also then view as a Riemann surface). Suppose $\psi dz^2$ is a $C^1$ quadratic differential on $M$. Then there exists $C<\infty$ depending only on $(M,g)$ such that
$$\|\psi dz^2-P_g(\psi dz^2)\|_{L^1}\leq C\|\dbar (\psi dz^2)\|_{L^1}.$$
\end{lemma}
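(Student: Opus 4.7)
The strategy is exactly parallel to that of Lemma \ref{Ptypelemma}, but the argument is substantially simpler because the Riemann surface $(M,g)$ is now fixed: there is no moduli parameter and consequently no need for Mumford's compactness theorem. The constant $C$ is allowed to depend on $(M,g)$, which is precisely what makes a clean compactness/contradiction scheme available.

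Without loss of generality we may assume $P_g(\psi dz^2)=0$, since both sides are unchanged when we subtract $P_g(\psi dz^2)$ from $\psi dz^2$ (the holomorphic part has vanishing $\dbar$). Suppose towards a contradiction that the estimate fails for any $C$. Then we obtain a sequence of $C^1$ quadratic differentials $\Psi_i=\psi_i dz^2$ on $(M,g)$ with $P_g(\Psi_i)=0$, $\|\Psi_i\|_{L^1}=1$, and $\|\dbar\Psi_i\|_{L^1}\to 0$. Fix a finite cover of $M$ by holomorphic coordinate charts $(U_\alpha,z_\alpha)$, $\alpha=1,\dots,N$, such that each $z_\alpha(U_\alpha)$ contains the closed unit disc $\overline D$ and $M=\bigcup_\alpha z_\alpha^{-1}(D_{1/2})$. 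On each chart write $\Psi_i=\psi_i^\alpha\, dz_\alpha^2$.

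For each fixed $\alpha$, the pointwise norm $|\Psi_i|_g$ is comparable to $|\psi_i^\alpha|$ up to constants depending only on $g$ and the chart, and the same holds for $|\dbar\Psi_i|_g$ versus $|(\psi_i^\alpha)_{\bar z_\alpha}|$. Thus $\|\psi_i^\alpha\|_{L^1(D)}$ and $\|(\psi_i^\alpha)_{\bar z_\alpha}\|_{L^1(D)}$ are uniformly bounded, so Lemma \ref{localrellichtypelemma} applies. A standard diagonal extraction over the finite collection of charts produces a subsequence along which $\psi_i^\alpha\to\psi_\infty^\alpha$ in $L^1(D_{1/2})$ for every $\alpha$, and the local holomorphicity conclusion of Lemma \ref{localrellichtypelemma} combined with compatibility on chart overlaps shows that the $\psi_\infty^\alpha$ assemble into a holomorphic quadratic differential $\Psi_\infty$ on $M$. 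By $L^1$ convergence on each chart we get $\|\Psi_\infty\|_{L^1}=1$, so in particular $\Psi_\infty\neq 0$.

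It remains to check $P_g(\Psi_\infty)=0$, which together with holomorphicity of $\Psi_\infty$ contradicts $\Psi_\infty\neq 0$. For any holomorphic quadratic differential $\Theta$ on $M$ (automatically smooth and bounded), $L^1$ convergence of $\Psi_i\to\Psi_\infty$ on $M$ gives
\[
\int_M\langle\Psi_\infty,\Theta\rangle\dmu_g=\lim_{i\to\infty}\int_M\langle\Psi_i,\Theta\rangle\dmu_g=0,
\]
using $P_g(\Psi_i)=0$ at each stage. Hence $\Psi_\infty\in\ker P_g$, and being holomorphic this forces $\Psi_\infty=0$, the desired contradiction. The only mild technical point --- and the one I expect to be the main obstacle --- is verifying that local $L^1$ convergence on each chart passes cleanly to global $L^1$ convergence against the fixed smooth background metric $g$; this however is routine, following from the finiteness of the cover and the smooth comparison between the chart densities and the Riemannian volume form.
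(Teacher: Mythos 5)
Your proof is correct and is precisely the argument the paper indicates when it says ``the same proof ideas allow one to prove variants on surfaces of nonconstant curvature'': the compactness--contradiction scheme of Lemma \ref{Ptypelemma} run on a fixed surface, with Lemma \ref{localrellichtypelemma} applied chart by chart and Mumford's theorem rendered unnecessary because the conformal structure does not vary. The points you flag (norm comparability in isothermal charts, patching local $L^1$ limits into a global holomorphic quadratic differential, and passing to the limit in the orthogonality relation against the finite-dimensional space of holomorphic quadratic differentials) are all routine exactly as you say.
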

An alternative proof of these results can be given using normalised isothermal coordinates, compare with the proof of Lemma \ref{lemma:conv-infty} below.

\section{Asymptotic analysis in the noncollapsing case}
\label{asymp_anal}

In this section we describe the bubbling properties of the flow at infinite time in the case that the domain metric $g$ does not degenerate. We restrict our attention to the case that the genus of the domain is at least 2; for genus 1 domains, see \cite{DLL}.

Roughly speaking we prove that as $t\to \infty$ the maps $u(t)$ subconverge (up to bubbling and modification by 
diffeomorphisms) to a branched minimal immersion $\bar u$ with the same action as $u(0)$ at the level of fundamental groups. More precisely, we show

\begin{prop}
\label{asymp_prop}
Let $(u,g)$ be any global (weak) solution of \eqref{flow} which does not degenerate at infinite time, i.e. such that 
\beq
\label{ass:non-deg}
\inf\{\ell(g(t)):\, t\in [0,\infty)\}>0 \eeq
and which satisfies property (i) of Theorem \ref{existence}
with $T=\infty$. 
Then there exist a sequence $t_j\to \infty$ and a family of orientation-preserving diffeomorphisms $f_j:M\to M$ such that the maps $u(t_j)\circ f_j$ converge weakly in $H^1(M)$ to either a branched minimal immersion or a constant map $\bar u$, with the same action on $\pi_1$, that is, $(\bar u)_*\sim (u(t))_*$ for all $t\in [0,\infty)$ for which $u(t)$ is smooth.

Furthermore, there exists a set $S$ consisting of at most finitely many points such that the above convergence is continuous (and weakly in $H^2$, strongly in $W^{1,p}, p<\infty$) 
locally on $M\setminus S$.
\end{prop}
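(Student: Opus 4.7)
The plan is to combine the $L^2$-energy decay formula \eqref{energy_decay_integrated} with Mumford's compactness theorem, the elliptic-Poincar\'e estimate from Lemma \ref{Ptypelemma}, and a Sacks-Uhlenbeck/Struwe style bubbling analysis. First, I would use \eqref{energy_decay_integrated} to select a sequence $t_j\to\infty$ along which
$$\|\tau_{g(t_j)}(u(t_j))\|_{L^2(M,g(t_j))}+\|\Re P_{g(t_j)}(\Phi(u(t_j),g(t_j)))\|_{L^2(M,g(t_j))}\to 0.$$
Since $\dbar\Phi$ is controlled pointwise by $|du|\,|\tau|$, boundedness of the energy (which follows from part (ii) of Theorem \ref{existence}) combined with Cauchy-Schwarz then gives $\|\dbar\Phi(u(t_j),g(t_j))\|_{L^1}\to 0$. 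Applying Lemma \ref{Ptypelemma} to $\Phi-P_g\Phi$, which is possible because of the noncollapsing assumption \eqref{ass:non-deg}, and using \eqref{real_complex} to control the holomorphic part, I would conclude that $\Phi(u(t_j),g(t_j))\to 0$ in $L^1$.

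Next I would use Mumford's compactness theorem, exactly as in the proof of Lemma \ref{Ptypelemma}, together with \eqref{ass:non-deg}, to produce orientation-preserving diffeomorphisms $f_j:M\to M$ such that $\bar g_j:=f_j^*g(t_j)\to \bar g$ smoothly to a limiting hyperbolic metric $\bar g$. Setting $\bar u_j:=u(t_j)\circ f_j$, the invariance of the energy, tension field, and Hopf differential under reparametrisation shows that $\bar u_j$ has uniformly bounded energy, $\tau_{\bar g_j}(\bar u_j)\to 0$ in $L^2$, and $\Phi(\bar u_j,\bar g_j)\to 0$ in $L^1$. This places us exactly in the setting of the standard bubbling analysis for almost-harmonic maps from a surface (compare \cite{struwe_book}): passing to a subsequence, $\bar u_j\weakto \bar u$ weakly in $H^1(M,\bar g)$, with strong convergence weakly in $H^2$ and in $W^{1,p}_{loc}$ (hence $C^0_{loc}$) for any $p\in[1,\infty)$ away from a finite set $S\subset M$ where at least an $\varepsilon_0$-quantum of energy concentrates, and $\bar u$ is a weakly harmonic map from $(M,\bar g)$ to $N$. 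The local strong convergence together with $\Phi(\bar u_j,\bar g_j)\to 0$ in $L^1$ forces $\Phi(\bar u,\bar g)\equiv 0$, so $\bar u$ is weakly conformal. Sacks-Uhlenbeck's removable singularity theorem then extends $\bar u$ smoothly across $S$, so it is either constant or a branched minimal immersion in the sense clarified in the remark following Theorem \ref{minsurfexist}.

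The last item is the identity of $\pi_1$-actions. Between any two consecutive singular times, $u(\cdot)$ is a smooth family and so its action on $\pi_1$ is constant; across each singular time $T_i$ only finitely many harmonic spheres bubble off at isolated points of $S(T_i)$, and since any loop in $M$ can be homotoped to avoid finitely many points, $(u(t))_*$ is preserved across $T_i$ as well. Finally, at the limit I would pick a basepoint $x_0\in M\setminus S$ and represent each class in $\pi_1(M,x_0)$ by a smooth loop $\gamma$ avoiding $S$; the $C^0_{loc}$-convergence $\bar u_j\to\bar u$ on a neighbourhood of $\gamma([0,1])$ then yields that $\bar u\circ\gamma$ is homotopic to $\bar u_j\circ\gamma$ for large $j$, while $\bar u_j\circ\gamma=u(t_j)\circ(f_j\circ\gamma)$ has the same free homotopy class as $u(t_j)\circ\gamma$ because $f_j$ is orientation-preserving and thus homotopic to a composition of elements of the mapping class group (indeed, the effect of $f_j$ amounts to a change-of-basepoint isomorphism once a basepoint is fixed). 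Combining these steps gives $(\bar u)_*\sim (u(t))_*$ as required.

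I expect the main technical obstacle to be the bubbling step: producing the concentration set $S$ and the weak limit $\bar u$ requires careful $\varepsilon$-regularity arguments for maps with small tension on shrinking balls of the hyperbolic surfaces $(M,\bar g_j)$, uniformly in $j$, which is where the uniform convergence $\bar g_j\to \bar g$ from Mumford and the noncollapsing hypothesis \eqref{ass:non-deg} are essential. The preservation of the $\pi_1$-action across singular times of the flow, though conceptually simple, also needs to be justified carefully using the fact that energy loss at a singular time is due to genuine sphere-bubbling, as recorded in the remark following Theorem \ref{existence} and proved in \cite{rupflin_existence}.
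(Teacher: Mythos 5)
Your overall architecture matches the paper's proof almost exactly: select $t_j$ via \eqref{energy_decay_integrated}, use the identity $\phi_{\zbar}=2\langle\rho^2\tau_g(u),u_z\rangle$ plus Cauchy--Schwarz to get $\|\dbar\Phi\|_{L^1}\to 0$, apply Lemma \ref{Ptypelemma} to conclude $\Phi\to 0$ in $L^1$, invoke Mumford compactness to extract $f_j$ with $f_j^*g(t_j)\to\bar g$, and run the $\eps_0$-regularity/bubbling analysis in normalised isothermal charts to get the weak $W^{2,2}_{loc}$ limit, which is weakly conformal, harmonic, and smooth by removable singularities. The argument that the flow itself preserves the $\pi_1$-action across the finitely many singular points is also the one used in the paper.

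There is, however, a genuine gap in your final step. You assert that $u(t_j)\circ f_j\circ\gamma$ has the same free homotopy class as $u(t_j)\circ\gamma$ ``because $f_j$ is orientation-preserving,'' and that the effect of $f_j$ ``amounts to a change-of-basepoint isomorphism.'' This is false: an orientation-preserving diffeomorphism of a genus $\geq 2$ surface need not be homotopic to the identity, and its induced map on $\pi_1(M)$ is in general an outer automorphism that is not inner -- a Dehn twist, for instance, changes the free homotopy classes of loops it crosses. So from $(\bar u)_*\sim (u(t_j))_*\circ (f_j)_*$ you cannot conclude $(\bar u)_*\sim (u(t_j))_*$. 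The paper confronts exactly this issue and resolves it by renormalising the diffeomorphisms: fix $i_0$ large, replace $f_i$ by $\tilde f_i:=f_i\circ f_{i_0}^{-1}$ (so that $\tilde f_{i_0}=\id$), and correspondingly replace the limit map by $\bar u\circ f_{i_0}^{-1}$ and the limit metric by $(f_{i_0}^{-1})^*\bar g$. Specialising the resulting chain of equivalences to $i=i_0$ gives $(\bar u\circ f_{i_0}^{-1})_*=(u_0)_*$ on the nose, which is what the proposition claims. Without some such renormalisation (or a separate argument that the $f_j$ eventually stabilise in the mapping class group, which is not available at this stage) your proof of the $\pi_1$ statement does not close; the rest of your argument is sound.
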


\begin{proof}
Let $(u,g)$ be as above. As discussed in Section \ref{intro}, 
because of \eqref{energy_decay_integrated} we can select a sequence
$t_j\to \infty$ such that $\tau_{g(t_j)}(u(t_j))\to 0$ and $P_{g(t_j)}(\Phi(u(t_j),g(t_j)))\to 0$ both in $L^2$, where $\Phi$ is representing the Hopf differential.

By assumption \eqref{ass:non-deg} and the Mumford compactness theorem in a slightly different form (see, for example, \cite[Appendix C]{tromba}, or \cite[Section 7]{RFnotes} 
for a discussion of more general results)
we obtain that a subsequence of the metrics $(g(t_j))$ converges in moduli space; i.e. there exists a family of orientation-preserving diffeomorphisms $f_j:M\to M$ and a limiting metric $\bar g\in \M$ such that 
$$g_j:=f_j^* [g(t_j)]\to \bar g \quad\text{ smoothly}.$$

Modifying the maps $u(t_j)$ with the same family of diffeomorphisms, $u_j:=u(t_j)\circ f_j$ we obtain a sequence $(u_j, g_j)$ of pairs 
with the following properties
\begin{enumerate}
\item[(i)] $E(u_j,g_j)\leq E_0<\infty$
\item[(ii)] $g_j\to \bar g \text{ smoothly }$
\item[(iii)] $\norm{\tau_{g_j}(u_j)}_{L^2}\to 0$
\item[(iv)] $\norm{P_{g_j}(\Phi(u_j,g_j))}_{L^2}\to 0$.
\end{enumerate}

Based on these properties we can extract a converging subsequence of $(u_j)$ using: 
\begin{lemma} \label{lemma:conv-infty}
For any sequence $(u_j,g_j)\in W^{2,2}(M,N)\times \M$ satisfying properties (i)-(iv) above, for some $\bar g\in \M$, there exist a weakly conformal harmonic map 
$\bar u:(M,\bar g)\to N$ and a finite set of points $S\subset M$ such that after passing to a subsequence we have 
$$u_j\weakto \bar u \text{ weakly in } H^1(M)$$
as well as weakly in $W^{2,2}_{loc}(M\setminus S)$ and strongly in $W^{1,p}_{loc}(M\setminus S)$ for all $p\in[1,\infty)$. 
\end{lemma}
Because of the convergence of the metrics $g_j\to \bar g$ we prove this result using arguments familiar from the theory of the harmonic map flow (cf. \cite{struweCMH}). 
\begin{proof}[Proof of Lemma \ref{lemma:conv-infty}]
A short and well-known calculation for the Hopf differential $\Phi(u,g)=\phi dz^2$ of a smooth map $u:M\to N\embed\R^K$ tells us that 
$$\phi_\zbar=2 \langle \rho^2\tau_g(u),u_z \rangle,$$
where $\rho$ is determined by $g=\rho^2|dz|^2$. 
In particular, 
\begin{equation}
\label{hopfest1}
\|\dbar (\Phi(u,g))\|_{L^1(M)}\leq \sqrt{2} \|\tau_g(u)\|_{L^2(M)}\cdot E(u,g)^\half.
\end{equation}
In our situation this implies that $\|\dbar (\Phi(u_j,g_j))\|_{L^1(M)}\to 0$. Applying Lemma \ref{Ptypelemma}, bearing in mind (iv) above, we find that 
\begin{equation}
\label{L1hopf}
\|\Phi(u_j,g_j)\|_{L^1}\to 0\qquad\text{ as }j\to\infty.
\end{equation}

The convergence of the metrics $g_j$ implies that all occuring energies are equivalent in the sense that there exists a constant $C<\infty$ such that 
$\frac{1}{C}E(u,\bar g)\leq E(u,g_j)\leq C E(u,\bar g)$ 
for every $u\in C^1(M,N)$. The sequence $(u_j)$ is thus bounded in $H^1(M)$ (with respect to $\bar g$ say) and has a weakly converging subsequence $u_j\weakto \bar u\in H^1(M)$.

The key point is that away from points where a certain amount of energy concentrates, we have better control on $u_j$ than just a $H^1$ bound. The basic regularity estimate is the following:
\begin{lemma}[e.g. \cite{Ding-Tian}]
\label{2ndderivs}
Given any closed smooth Riemannian manifold $(N,G)$ there exist numbers $\eps_0>0$ and $C<\infty$ with the following properties: Let $D_r$ be a disc in the Euclidean 
plane of radius $r>0$,
and $u\in C^2(D_{2r},N)$ with $E(u;D_{2r})\leq \eps_0$. Then 
$$\int_{D_r} \abs{\na^2 u}^2 \leq \frac{C}{r^2} E(u;D_{2r})+C\norm{\tau(u)}_{L^2(D_{2r})}^2.$$
\end{lemma}
Once we have this $\ep_0$, we can pass to a subsequence and extract a finite set of concentration points
$$S:=\{x\in M:\, \text{ for any neighbourhood }\Om \text{ of }
x, \,\limsup_{j\to \infty} E(u_j,g_j;\Om)\geq \eps_0 \}.$$
We will now derive $W^{2,2}$ estimates on the complement of $S$. For each $x\in M\backslash S$, pick $\bar r\in (0,1]$ sufficiently small so that $3\bar r$ is less than the injectivity radius at $x$ with respect to $\bar g$, and $E(u_j,g_j;B_{\bar g}(x,3\bar r))<\ep_0$ for sufficiently large $j$. 
By exploiting that $g_j\to\bar g$, we find that $2\bar r$ is less than the injectivity radius at $x$ with respect to $g_j$, and 
$E(u_j,g_j;B_{g_j}(x,2\bar r))<\ep_0$, for sufficiently large $j$. 
We now pick $j$-dependent \emph{normalised isothermal coordinates} around $x$ in the following way. Let $\vph_j:(D,g_{H})\to (M,g_j)$ be a sequence of locally isometric coverings from the Poincar\'e disc, mapping the origin to $x$ and
the first coordinate vector at the orgin to a positive multiple of a fixed vector in $T_xM$. Then the maps $\varphi_j$ converge smoothly locally, and for some $r>0$ depending only on $\bar r$, we have $\vph_j(D_{2r})\subset B_{g_j}(x,2\bar r)$.
In particular, writing $u_j$ also for the map $u_j\circ \varphi_j$ representing it in these coordinate charts, we conclude that
$$E(u_j,D_{2r})<\ep_0$$
for sufficiently large $j$, and even working with respect to the flat metric on $D_{2r}$, we have $\|\tau(u_j)\|_{L^2(D_{2r})}\to 0$
(because the Poincar\'e and Euclidean metrics are conformally equivalent on $D_{2r}$).
Applying Lemma \ref{2ndderivs} for each $j$, we obtain uniform bounds for $\|\nabla^2 u_j\|_{L^2(D_r)}$, and thus $u_j$ is bounded uniformly in $W^{2,2}(D_r)$. Passing to a subsequence, we deduce weak $W^{2,2}$ convergence to some limiting map from $D_r$ to $N$, and by \eqref{L1hopf}, that map must be weakly conformal.
Repeating the argument elsewhere in $M\backslash S$, and 
exploiting once more that $g_j\to \bar g$, 
we obtain that $u_j$ converges weakly in $W^{2,2}_{loc}(M\backslash S,N)$ to a limit, which must be weakly conformal with respect to $\bar g$. But that limit must then coincide with the weak-$H^1$ limit $\bar u$.
By passing to the limit in the equation satisfied by $u_j$ (i.e. in the definition of $\tau_{g_j}$) we establish that $\bar u$ must be harmonic in $M\backslash S$, and therefore smooth and harmonic throughout $M$ by the removability of point singularities \cite{SU}.
\end{proof}

Having proved Lemma \ref{lemma:conv-infty}, we continue with the proof of Proposition \ref{asymp_prop}.
Consider now the action  
$$u(t)_*:\pi_1(M,x)\to \pi_1(N,u(x,t))$$ on the fundamental groups induced by a (weak) solution $(u,g)$ of \eqref{flow} satisfying property (i) of Theorem \ref{existence}. As the set of singular points $\cup_{i}\{T_i\}\times S(T_i)$ is finite, we may choose a basepoint $x$ and a generating set of elements $\{\gamma_k\}$ of $\pi_1(M,x)$ that are disjoint from $\cup_{i}S(T_i)$. For any $t,t'\in [0,T)$, the path $\alpha_{t,t'}(s):=u(x,t+s(t'-t)), s\in [0,1]$, thus induces a change-of-basepoint isomorphism for which the loops  
$$u(t)\circ\gamma_k \text{ and } \alpha_{t,t'}^{-1} \cdot (u(t')\circ \gamma_k)\cdot \alpha_{t,t'}$$ represent the same element of $\pi_1(N,u(x,t))$. We thus conclude that the flow \eqref{flow} leaves the action on the fundamental group invariant.  

Recall now that by hypothesis, the metric $g(t)$ degenerates neither at finite nor infinite time and consider again the maps $u_i=u(t_i)\circ f_i$ we fed into Lemma \ref{lemma:conv-infty}, and the limits $\bar u$ and $\bar g$, and singular set $S$ that we obtained in return. Selecting a base point $x$ and generating curves $\{\gamma_k\}$ of $\pi_1(M,x)$ disjoint from $S$, we obtain that the curves $u_i\circ \gamma_k$ converge uniformly to the limiting curves $\bar u\circ \gamma_k$. Conjugating with an appropriate sequence of paths $\alpha_i$ (converging to a constant) we thus obtain for the induced action on the fundamental groups
\beq \label{eq:action} \bar u_*\sim (u_i)_*=(u(t_i)\circ f_i)_*=u(t_i)_*\circ (f_i)_*\sim (u_0)_*\circ (f_i)_*\eeq 
for $i$ sufficiently large, say $i\geq i_0$. 
In a slight abuse of notation, here and in the following we use $(\cdot)_*$ also to denote the action of maps on fundamental groups with basepoints different from $x$, such as $(u_0)_*$ acting on $\pi_1(M,f_i(x))$,
with the basepoint determined such that the composition of the above homomorphisms between fundamental groups is well defined. 
Furthermore we extend the equivalence relation $\sim$ to general homomorphisms between fundamental groups.

We cannot conclude from \eqref{eq:action} that $(u_0)_*\sim (\bar u)_*$ since the diffeomophisms $f_i$ are in general not homotopic to the identity. However replacing $f_i$ in the above construction by $\tilde f_i=f_i\circ f_{i_0}^{-1}$ gives a new sequence of maps 
$\tilde u_i:=u(t_i)\circ \tilde f_i
=u_i\circ f_{i_0}^{-1}$ 
that converge to the map $ \bar u\circ f_{i_0}^{-1}$, which is weakly conformal and harmonic with respect to $(f_{i_0}^{-1})^*\bar g$ and has the same action on the fundamental groups as $u_0$ because
\begin{equation}
\label{rodent}
\begin{aligned}
(\bar u \circ f_{i_0}^{-1})_* &=\bar u_* \circ (f_{i_0}^{-1})_*
\sim (u_i)_* \circ (f_{i_0}^{-1})_*
= (u(t_i)\circ f_i)_* \circ (f_{i_0}^{-1})_*\\
&=(u(t_i))_*\circ (f_i \circ f_{i_0}^{-1})_*
\sim (u_0)_* \circ (\tilde f_i)_*
\end{aligned}
\end{equation}
for all $i\geq i_0$, and specialising to $i=i_0$ (note that 
$\tilde f_{i_0}$ is the identity) we find that
\begin{equation}
\label{panda}
(\bar u \circ f_{i_0}^{-1})_*=(u_0)_*.
\end{equation}
\end{proof}
\begin{rmk}
We do not claim that the diffeomorphisms $(\tilde f_i)_{i\in \N}$ are homotopic to the identity in general; we do not even claim that they cannot diverge within the mapping class group.
\end{rmk}

\begin{rmk}
When $\pi_2(N)=0$, then the branched minimal immersion $\bar u \circ f_{i_0}^{-1}$ obtained in the above proof is homotopic
to $u_0$, after possibly increasing $i_0$. Indeed, for sequences of maps $v_i:M\to N$ that converge (uniformly) away from a finite number of points the assumption that $\pi_2(N)=0$ implies that the maps $v_i$ are all in the same homotopy class as the limit map provided $i$ is sufficiently large. This means that the flow does not change the homotopy class of $u(t)$ at finite time, nor at infinity, provided we use the diffeomorphisms $\tilde f_j=f_j\circ f_{i_0}^{-1}$ where
$i_0$ is large enough so that
$u_{i_0}$ is homotopic to $\bar u$. 
\end{rmk}
 
\subsection{A proof of Theorem \ref{minsurfexist} of Schoen-Yau and Sacks-Uhlenbeck}
\label{SYpf}
The cases that $M$ has genus 0 or 1 are handled by the classical harmonic map flow and \cite{DLL} respectively. Therefore we will assume that the genus of $M$ is at least 2.
Let $u_0$ be an incompressible map and let $g_0$ be a hyperbolic metric. By the results of the previous section it is enough to show that the corresponding solution of \eqref{flow} exists for all time and that the domain metrics $g(t)$ do not degenerate at infinity.

In fact, since the maps $u(t)$ are incompressible for all $t$, any degeneration of the metric would cause the energy to become unbounded according to 

\begin{lemma}[cf. \cite{tromba}] 
\label{lemma:incomp}
Let $(M,g)$ be a closed orientable hyperbolic surface of genus $\gamma\geq 2$
and let $(N,G)$ be a closed Riemannian manifold. Then the energy of an incompressible map 
$u:M\to N$ is bounded from below by
$$E(u,g)\geq \frac{\varphi(\ell)}{\ell}$$
where $\ell$ is the length of the shortest closed geodesic on $(M,g)$ and $\varphi(\ell):=c^2 (\pi/2-\arctan(\sinh(\frac{\ell}{2})))\to \frac{c^2\pi}{2}$ as $\ell\to 0$, where $c>0$ is the length of the shortest closed geodesic in the target $(N,G)$.
\end{lemma}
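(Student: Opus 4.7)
The plan is to exploit the well-known collar lemma from hyperbolic geometry together with incompressibility to force a large contribution to the energy from a neighbourhood of the shortest closed geodesic. Fix $g\in \M$, let $\gamma\subset (M,g)$ be a shortest closed geodesic, of length $\ell$, and recall that such a $\gamma$ is automatically simple and nontrivially homotopic. The Keen--Halpern collar theorem provides an embedded open collar around $\gamma$ isometric, in Fermi coordinates $(\rho,\theta)\in(-w,w)\times\R/\ell\Z$, to the cylinder carrying the metric
$$d\rho^2+\cosh^2(\rho)\,d\theta^2,$$
where $\theta$ is arc length along $\gamma$ and the collar half-width $w=w(\ell)$ satisfies $\sinh(w)\sinh(\ell/2)=1$.

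Next I would use incompressibility. Since $\gamma$ is simple and non-homotopically trivial, its image under $u$ represents a nontrivial element of $\pi_1(N)$; the same is true of every loop $\theta\mapsto u(\rho,\theta)$ for any fixed $\rho$, because all of these loops are freely homotopic to $u\circ\gamma$ through the collar. Hence each such loop has length at least $c$, the length of the shortest closed geodesic in $(N,G)$. By Cauchy--Schwarz, for a.e.\ $\rho\in(-w,w)$,
$$c^2\leq\Big(\int_0^\ell |u_\theta|\,d\theta\Big)^{\!2}\leq \ell\int_0^\ell |u_\theta|^2\,d\theta,
\qquad\text{so}\qquad \int_0^\ell|u_\theta|^2\,d\theta\geq \frac{c^2}{\ell}.$$

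Finally, I would discard everything except the $\theta$-component of the energy on the collar and compute directly. Using that the area element is $\cosh(\rho)\,d\rho\,d\theta$ and the inverse metric factor in the $\theta$-direction is $\cosh^{-2}(\rho)$,
$$E(u,g)\geq\tfrac12\int_{-w}^{w}\!\!\int_0^\ell \frac{|u_\theta|^2}{\cosh^2(\rho)}\,\cosh(\rho)\,d\theta\,d\rho
\geq \frac{c^2}{\ell}\int_0^{w}\frac{d\rho}{\cosh(\rho)}=\frac{c^2}{\ell}\,\arctan(\sinh w).$$
The collar identity $\sinh w=1/\sinh(\ell/2)$ together with $\arctan x+\arctan(1/x)=\pi/2$ for $x>0$ converts $\arctan(\sinh w)$ into $\pi/2-\arctan(\sinh(\ell/2))$, giving exactly $\varphi(\ell)/\ell$.

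The only real obstacle, and it is mild, is justifying the slicewise length bound under the regularity available: Lemma \ref{lemma:incomp} will be applied to finite-energy maps along the flow, so Fubini gives $H^1$-regular slices for a.e.\ $\rho$ and one must argue (e.g.\ by approximation and the fact that $u$ is continuous off a finite set for smooth times) that those $H^1$-slices still represent the free homotopy class of $u\circ\gamma$ and hence are longer than $c$. Apart from this point and checking the Fermi-coordinate expression of the hyperbolic metric, the argument is purely computational.
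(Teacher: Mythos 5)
Your proof is correct and is essentially the paper's argument: both use the collar lemma around the shortest closed geodesic, incompressibility to force each circle slice onto a loop of length at least $c$, and Cauchy--Schwarz plus integration over the collar to obtain $\varphi(\ell)/\ell$. The only difference is cosmetic -- you work in Fermi coordinates and integrate $\operatorname{sech}\rho$ directly, whereas the paper uses the conformal (Keen--Randol) form of the collar and conformal invariance of the energy; the two computations are identical up to the change of coordinates.
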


The proof is based on the collar lemma which gives the following explicit description of hyperbolic surfaces near closed 
geodesics:

\begin{lemma}[Keen-Randol \cite{randol}] \label{lemma:collar}
Let $(M,g)$ be a closed orientable hyperbolic surface and let $\gamma$ be a simple closed geodesic of length $\ell$. Then there is a neighbourhood $U$ around $\gamma$, a so called collar, which is isometric to the 
cylinder 
$$P=(-X(\ell),X(\ell))\times S^1$$
equipped with the metric $\rho^2(x)(dx^2+d\theta^2)$ where 
$$\rho(x)=\frac{\ell}{2\pi \cos(\frac{\ell x}{2\pi})} 
\qquad\text{ and }\qquad  
X(\ell)=\frac{2\pi}{\ell}\left(\frac\pi2-\arctan\left(\sinh\left(\frac{\ell}{2}\right)\right) \right).$$ 
The geodesic $\gamma$ then corresponds to the circle $\{(0,\theta)\ |\ \theta\in S^1\}\subset P$.
\end{lemma}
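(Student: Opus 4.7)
The plan is to work in the universal cover $\Hyp$ and exploit the fact that the simple closed geodesic $\gamma$ corresponds to an invariant axis of a hyperbolic element $A\in\PSL(2,\R)$ in the uniformising group. After conjugation I may assume that this axis is the imaginary half-line and that $A(z)=e^\ell z$. I will then introduce Fermi coordinates $(d,\sigma)$ in a tubular neighbourhood of this axis, where $d$ is the signed hyperbolic distance to the axis and $\sigma$ is arc-length along it, in which the hyperbolic metric takes the warped-product form $\df d^2+\cosh^2(d)\df\sigma^2$. The isometry $A$ becomes the translation $(d,\sigma)\mapsto(d,\sigma+\ell)$, so quotienting by $\langle A\rangle$ produces a cylinder isometric to $\R\times(\R/\ell\Z)$ carrying the above metric; normalising the angular variable by $\theta=2\pi\sigma/\ell$ gives $\rho$ of the required form once one changes $d$ to an isothermal coordinate.

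Next I would embed $\R\times(\R/\ell\Z)$ into the quotient surface. Let $T$ be the uniformising Fuchsian group, so $\gamma\subset\Hyp/T$. The key step is to determine the largest width $d_0(\ell)$ for which the projection $\Hyp\to \Hyp/T$ restricted to the tube $\{|d|<d_0(\ell)\}$ is injective. Injectivity can fail in two ways: either a power $A^k$ with $k\neq 0$ identifies two points in the tube (impossible since $A^k$ acts as a $\sigma$-translation preserving each level $d=$ const and leaves a simple geodesic embedded), or some $B\in T\setminus\langle A\rangle$ maps a point of the tube into it. To rule out the latter for $|d|<d_0(\ell)$ I will use the classical two-generator inequality: if $A,B\in\PSL(2,\R)$ generate a discrete torsion-free non-elementary subgroup with $A$ hyperbolic of translation length $\ell$, then the hyperbolic distance $\delta$ between their axes satisfies $\sinh(\ell/2)\sinh(\delta)\geq 1$, equivalently $\delta\geq d_0(\ell):=\arsinh(1/\sinh(\ell/2))$. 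This is the technical heart of the argument, and will follow from analysing the commutator $[A,B]$ and observing that if $\delta$ were smaller then $[A,B]$ would become elliptic of small rotation angle, contradicting discreteness of $T$. Simplicity of $\gamma$ ensures that the relevant axes of non-commuting elements do not cross the tube.

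Having established embeddedness on $\{|d|<d_0(\ell)\}$, the final step is a routine change of variables to bring the metric into the stated isothermal form. I will introduce the new coordinate $x$ via $\df x=\frac{2\pi}{\ell}\frac{\df d}{\cosh d}$, whose antiderivative is (up to a constant) $x=\frac{2\pi}{\ell}\arctan(\sinh d)$, using the Gudermannian identity. Then $\cosh d=\sec(\ell x/(2\pi))$, and the metric becomes
\begin{equation*}
\df d^2+\cosh^2(d)\,\Bigl(\tfrac{\ell}{2\pi}\Bigr)^2\df\theta^2
=\rho(x)^2(\df x^2+\df\theta^2),\qquad
\rho(x)=\frac{\ell}{2\pi\cos(\ell x/(2\pi))},
\end{equation*}
with $x$ ranging over $[-X(\ell),X(\ell)]$ where $X(\ell)=\frac{2\pi}{\ell}\arctan(\sinh d_0(\ell))$. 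A direct computation using $\sinh d_0(\ell)=1/\sinh(\ell/2)$ and the identity $\arctan(1/t)=\pi/2-\arctan(t)$ (valid for $t>0$) turns this into the claimed expression $X(\ell)=\frac{2\pi}{\ell}\bigl(\tfrac{\pi}{2}-\arctan(\sinh(\ell/2))\bigr)$.

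The main obstacle will be the two-generator collar width estimate in the second paragraph, since the local description of the metric and the coordinate manipulations of the third paragraph are essentially bookkeeping. I would either derive it directly via the Shimizu/Jorgensen-type inequality for the commutator of two hyperbolic isometries, or cite it from a standard reference (e.g.\ Buser's book on the geometry of hyperbolic surfaces) which handles this classical input in full.
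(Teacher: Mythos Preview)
The paper does not prove this lemma; it is simply quoted from the literature (Randol, following Keen), with only the subsequent remark that the relation $\sinh(\ell/2)\sinh(w/2)=1$ for the full collar width $w$ is sharp. So there is no proof in the paper to compare your proposal against --- you are supplying an argument where the authors chose to cite one.

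That said, your sketch is largely sound. The Fermi-coordinate description of the lifted tube and the passage to isothermal coordinates via the Gudermannian are carried out correctly and recover exactly the stated $\rho(x)$ and $X(\ell)$. The one place where your outline is loose is the embedding step. The inequality you state --- that for every $B\in T\setminus\langle A\rangle$ the distance $\delta$ between the axes of $A$ and $B$ satisfies $\sinh(\ell/2)\sinh(\delta)\geq 1$ --- is not the correct formulation: an element $B$ carrying a point of the tube into the tube need not have its axis near that of $A$ (the two points it identifies can be far apart along the $\sigma$-direction), and the standard proofs of the collar width (Keen, Halpern, Buser) proceed via the geometry of pairs of pants or right-angled hexagons rather than via a two-generator axis-distance bound of this form. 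Since you already intend to outsource this step to Buser's book, this is not a fatal gap, but be aware that what you will find there is organised differently from the commutator/J{\o}rgensen-style argument you sketch.
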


In this version of the collar lemma, the intrinsic distance $w$ between the two ends of the collar is related to $\ell$ via
$$\sinh \frac{\ell}{2} \sinh \frac{w}{2}=1,$$
which is sharp.

\begin{proof}[Proof of Lemma \ref{lemma:incomp}] 
Let $g\in \M$ and let $U$ be the collar neighbourhood around the shortest closed geodesic of $(M,g)$. Given any map $u:M\to N$ we consider the energy of the corresponding map $\tilde u:P\to N$ on the cylindrical collar. If $u$ is incompressible, the curves $\beta_x:\theta\mapsto \tilde u(x,\theta)$ are closed and topologically nontrivial and thus have length $L(\beta_x)\geq c>0$.
Using the conformal invariance of the Dirichlet energy we can thus estimate
\begin{equation*}
E(u,g)\geq E(\tilde u,P)\geq\int_{-X}^X E(\beta_x) dx\geq \frac1{4\pi}\int_{-X}^X L(\beta_x)^2 dx\geq \frac{c^2}{2\pi}\cdot X(\ell)
\end{equation*}
which implies the claim.
\end{proof}

Given any solution $(u,g)$ of \eqref{flow} with an incompressible initial map $u_0$ we thus obtain that the length $\ell(g(t))$ of the shortest closed geodesic of
$(M,g(t))$ is bounded away from zero by a constant $c=c(E(u_0,g_0), (N,G))>0$. Consequently the solution constructed in Theorem \ref{existence} exists for all time and as $t\to \infty$ the maps
$u(t)$ subconverge (after modification with diffeomorphisms) to a branched minimal immersion $\bar u$ with the same action as $u_0$ on $\pi_1(M)$.

\appendix

\section{A reformulation of the flow equations}
\label{reformulation}

Up to now, we considered the flow as an evolving pair $(u,g)$ of map $u:M\to N$ and metric $g\in\m_c$.
In this section we change viewpoint by lifting the whole picture to the universal cover $(\tilde M,\tilde g)$ of $(M,g)$, which is the Euclidean plane if the genus $\gamma=1$ or the hyperbolic plane $\Hyp$ if $\gamma\geq 2$, and see the flow essentially as an evolving pair $(\tilde u,\Gamma)$, where $\Gamma$ is
a group of isometries of $(\tilde M,\tilde g)$ and
$\tilde u:\tilde M\to N$ is a map which is invariant under the action of $\Gamma$.

In the genus one case, this will show that our flow corresponds to the flow of Ding-Li-Liu \cite{DLL}.
In the higher genus case, it will give a different viewpoint which might be used as a basis for an alternative short-time existence proof, for example.
In general, this approach has the advantage that it avoids the quotient by the \emph{infinite-dimensional} group of 
diffeomorphisms used in the definition of $\cal A$.

Strictly speaking, it is natural to quotient the manifold of pairs
$(\tilde u,\Gamma)$ by the natural action of the group of isometries of $(\tilde M, \tilde g)$ -- i.e. for any such isometry $\si$, we identify 
$(u\circ \si, \si^{-1}\Gamma \si)$ with $(\tilde u, \Gamma)$ --
but we only emphasise this viewpoint in the case of genus $\gamma\geq 2$.

\subsection{The genus 1 case}
\label{torus_sect}

For $(\al,\be)\in\R^2$, with $\be\neq 0$, consider the group $\Gamma_{\al,\be}$ of translations of the Euclidean plane which fix the lattice generated by $(1/\be,0)$ and $(\al,\be)$. Then the quotient $M_{\al,\be}:=(\R^2,g_E) / \Gamma_{\al,\be}$ of the Euclidean plane is a flat torus of unit area. (Here we are writing $g_E:=dx^2+dy^2$, and will use the same notation for the resulting metric on the quotient.)

We will be switching between viewing the domain of the map part of the flow as $(M_{\al,\be},g_E)$ and as a fixed underlying manifold, say $M:=M_{0,1}$, the square torus, but with a varying metric as in the introduction.
We pass between the two viewpoints via the linear map
$L_{\al,\be}:\R^2\to\R^2$ which sends $(1,0)$ to $(1/\be,0)$ and $(0,1)$ to $(\al,\be)$; given as a matrix, this is
$$\left(
\begin{array}{cc} 1/\be & \al \\ 0 & \be
\end{array}
\right).$$
The flat metrics we consider on $M$ will then be the 2-parameter family $L_{\al,\be}^*(g_E)$. We will be able to see the flow as moving the domain metric on $M$ within this two-dimensional submanifold of the infinite dimensional manifold of unit area flat metrics on $M$ because:

{\bf Claim:}
The two-dimensional submanifold of metrics $\{L_{\al,\be}^*(g_E)\}$ on $M$ is \emph{horizontal}. Explicitly, if we take a smooth one-parameter family 
$(\al+s \dot\al,\be+s\dot\be)$
for $s$ in a neighbourhood of $0$,  
then the metrics on $M_{\al,\be}$ defined by 
\begin{equation}
g(s):=(L_{\al,\be}^{-1})^*(L_{\al+s\dot\al,\be+s\dot\be})^*(g_E)
=\left(L_{\al+s\dot\al,\be+s\dot\be} \circ (L_{\al,\be}^{-1})\right)^*(g_E)
\end{equation}
satisfy
$$\pl{g}{s}\bigg|_{s=0}=Re\left[
\left(
-\frac{2\dot\be}{\be}-i(\frac{\al\dot\be}{\be^2}+\frac{\dot\al}{\be})
\right)dz^2
\right]$$
and the right-hand side is the real part of a holomorphic quadratic differential.

\begin{proof}
By definition of the Lie derivative, we have 
$$\pl{g}{s}\bigg|_{s=0}=\lie_X(g_E),$$
where
$$X=\frac{d}{ds}\bigg|_{s=0}L_{\al+s\dot\al,\be+s\dot\be} \circ (L_{\al,\be}^{-1}).$$
(Note $X$ is a vector field on $\R^2$, not on a torus.)
A short calculation then yields
$$X(x,y)=\left(-\frac{\dot\be}{\be}x
+\left(\frac{\al\dot\be}{\be^2}+\frac{\dot\al}{\be}\right)y,
\frac{\dot\be}{\be}y\right).$$
\cmt{Here is the calculation, to aid checking: We equate linear maps with their matrices!
$$L_{\al+s\dot\al,\be+s\dot\be}=\left(
\begin{array}{cc} 1/(\be+s\dot\be) & \al+s\dot\al \\ 0 & \be+s\dot\be
\end{array}
\right)
\qquad
L_{\al,\be}^{-1}=\left(
\begin{array}{cc} \be & -\al \\ 0 & 1/\be
\end{array}
\right)
,$$
so the product matrix (corresponding to) $L_{\al+s\dot\al,\be+s\dot\be} \circ (L_{\al,\be}^{-1})$ is
$$\left(
\begin{array}{cc} \frac{\be}{\be+s\dot\be} & -\frac{\al}{\be+s\dot\be}+\frac{\al+s\dot\al}{\be} \\ 0 & \frac{\be+s\dot\be}{\be}
\end{array}
\right).$$
Differentiate wrt $s$ and set $s=0$:
$$\frac{d}{ds}\bigg|_{s=0}L_{\al+s\dot\al,\be+s\dot\be} \circ (L_{\al,\be}^{-1})=
\left(
\begin{array}{cc} -\frac{\dot\be}{\be} & \frac{\al\dot\be}{\be^2}+\frac{\dot\al}{\be} \\ 0 & \frac{\dot\be}{\be}
\end{array}
\right)...$$
}

Viewing $\lie_X g$ as the symmetric gradient of $X$ 
we then have
\begin{equation}
\label{horizontal_formula}
\begin{aligned}
\pl{g}{s}\bigg|_{s=0}&=
2\left[
-\frac{\dot\be}{\be}(dx^2-dy^2)
+\left(\frac{\al\dot\be}{\be^2}+\frac{\dot\al}{\be}\right)dx\,dy
\right]\\
&= Re\left[
\left(
-\frac{2\dot\be}{\be}-i(\frac{\al\dot\be}{\be^2}+\frac{\dot\al}{\be})
\right)dz^2
\right].
\end{aligned}
\end{equation}
\end{proof}

As remarked above, the fact that the metrics $\{L_{\al,\be}^*(g_E)\}$ form a horizontal submanifold allows us to see a flow solution $(u,g)$ to \eqref{flow}, in the genus one case, as a time-dependent map $u:M\to N$ together with a domain metric $\{L_{\al(t),\be(t)}^*(g_E)\}$ on $M$. We now compute the induced evolution of $\al(t)$ and $\be(t)$ under this flow. We will use these formulae only to make the connection with \cite{DLL}.

It is convenient to write $u=U\circ L_{\al,\be}$, so that we can view our flow map $u$ as a map $U$ defined on $M_{\al,\be}$.
In that case, the Hopf differential is defined to be
$$\phi dz^2=(|U_x|^2-|U_y|^2-2i\langle U_x,U_y\rangle)dz^2,$$
and because the holomorphic quadratic differentials are here precisely the parallel quadratic differentials (i.e.~constant multiples of $dz^2$ using this global coordinate) we have
\begin{equation}
\label{squirrel}
\pl{g}{t}=\frac{\eta^2}{4}Re[P_g(\phi dz^2)]=\frac{\eta^2}{4}Re\left[\Big(\int_{M_{\al,\be}}|U_x|^2-|U_y|^2-2i\langle U_x,U_y\rangle\Big)\cdot dz^2\right].
\end{equation}
We can now compare \eqref{squirrel} with \eqref{horizontal_formula} to show that the flow makes $\al$ and $\be$ evolve according to
$$\dot\be=-\frac{\eta^2}{4}\frac{\be}{2}\int_{M_{\al,\be}} |U_x|^2-|U_y|^2$$
and
$$\dot\al=\frac{\eta^2}{4}\int_{M_{\al,\be}} \frac{\al}{2}\left(|U_x|^2-|U_y|^2\right)
+2\be\langle U_x,U_y\rangle.$$
Thus, as desired, we can see the flow as evolving the subgroup 
$\Gamma_{\al,\be}$ together with a $\Gamma_{\al,\be}$ invariant map $\tilde u:\R^2\to N$ arising by lifting $U$.
The map $\tilde u$ will then evolve according to
$$\pl{\tilde u}{t}=\tau(\tilde u)-d\tilde u(X)$$
where the tension $\tau$ is computed with respect to the Euclidean
metric.

Finally, we would like to compare the flow with that of Ding-Li-Liu \cite{DLL}, which first requires us to rewrite the formulae in terms of $u$ rather than $U$.
By definition, we have $U(x,y)=u(\be x-\al y,y/\be)$, giving
$U_x=\be u_x$ and $U_y=-\al u_x+\frac{u_y}{\be}$, and hence
(after a little manipulation)
$$\dot\be=-\frac{\eta^2}{4}\frac{\be}{2}\int_M (\be^2-\al^2)|u_x|^2-
\frac{|u_y|^2}{\be^2}+\frac{2\al}{\be}\langle u_x,u_y\rangle$$
and
$$\dot\al=\frac{\eta^2}{4}\int_M |u_x|^2\left(-\frac{3}{2}\al\be^2-\frac{\al^3}{2}\right)-\frac{\al}{2\be^2}|u_y|^2
+\left(\frac{\al^2}{\be}+2\be\right)\langle u_x,u_y\rangle.$$
Moreover, in \cite{DLL} the conformal tori $\Si_{a,b}$ arising by taking the quotient of $\R^2$ by the lattice generated by $(1,0)$ and $(a,b)$ were considered.
Because $\Si_{\al\be,\be^2}$ arises as a dilation of $M_{\al,\be}$ by a factor $\be$, we thus find that $a=\al\be$ and $b=\be^2$, and therefore (after a little more manipulation)
$$\dot a=\frac{\eta^2}{4}2b\int_M \langle u_x,u_y\rangle-a|u_x|^2,$$
and
$$\dot b=\frac{\eta^2}{4}\int_M -(b^2-a^2)|u_x|^2+|u_y|^2-2a\langle u_x,u_y\rangle$$
which agrees with the formulae in \cite{DLL} in the case that $\eta^2=2$.

\subsection{The higher genus case}

There is an analogue of the construction in the previous section which would provide an alternative approach to some of the theory of this flow. However, since we don't use it directly in this paper, we just sketch the construction.
Despite the genus one construction, we cannot find a $6\ga -6$
dimensional horizontal slice within the space of hyperbolic metrics, and this prevents us from mimicking the construction of the maps $L_{\al,\be}$, etc.

In this case, the flow is seen to move within the space of pairs
$(\tilde u, \Gamma)$, where $\Gamma$ is a group of isometries of   $\Hyp$ such that $\Hyp / \Gamma$ is a closed orientable surface of fixed genus and $\tilde u$ is a map from $\Hyp$ which is invariant under the action of $\Gamma$ (i.e. for $\si\in \Gamma$, we have $\tilde u=\tilde u\circ \si$).
More precisely, the flow moves within the quotient of this space under the action of $\PSL(2,\R)$ -- i.e. if $\si\in \PSL(2,\R)$ then we identify $(\tilde u, \Gamma)$ with $(\tilde u \circ \si,\si^{-1}\Gamma \si)$. 

Now there is a correspondence between holomorphic quadratic differentials $\psi dz^2$ on $\Hyp / \Gamma$ (whose real parts represent tangent vectors in the space of hyperbolic metrics) lifted to $\Hyp$, and elements of
$$V:=\{X\in\Gamma(T\Hyp)\ |\ \lie_X g_\Hyp=Re[\psi dz^2]\}$$ 
with elements identified if they differ by a Killing vector field on $\Hyp$. Vector fields $X$ representing elements of $V$ can be viewed as tangent vectors in the space of subgroups $\Gamma$ as above (intuitively, one pushes a whole fundamental polygon 
in the direction of $X$ to give the new fundamental polygon) and we retain the notation $X$ to represent this tangent vector.
The flow may then be written
$$\pl{\Gamma}{t}=X(\Gamma,\tilde u);\qquad \pl{\tilde u}{t}=\tau(\tilde u)-d\tilde u(X(\Gamma,\tilde u))$$
where $\tau$ is the tension with respect to the hyperbolic metric.

\section{Viewing $\A$ as a manifold}
\label{banach_manifold}

In Section \ref{flow_def_sect} we defined $\A$, and pretended that it was a manifold in order to formally derive our new flow equations.
Although we do not need to make that discussion rigorous for the purposes of this paper, in this section we sketch how to adapt the definition of $\A$ in order to obtain a smooth Banach manifold, and we describe its tangent bundle.
We do that by following 
the path laid out in Teichm\"uller theory (see \cite[Chapter 2]{tromba}) 
but restrict to the case $\ga>1$ for simplicity.

For $s>0$ sufficiently large (as discussed in \cite{tromba}, $s>3$ is large enough) we  relax the regularity of $u$, $g$ and $f$ by asking that $u\in H^s(M,N)$ and $g\in\M^s$ (the metrics of constant curvature as in $\M$, but of regularity $H^s$) and 
$f\in \D_0^{s+1}$, the group of diffeomorphisms $f:M\to M$ 
of regularity $H^{s+1}$ that are isotopic to the identity.
We can then define 
$$\A^s:=(H^s(M,N)\times\M^s)/\D_0^{s+1},$$
and it is this that will be a Banach manifold.

By consideration of isothermal coordinates, any element of $\A^s$ can be represented by a pair $(u_0,g_0)$ with $g_0$ \emph{smooth} and $u_0\in H^s$. By the slice theorem (\cite[Theorem 2.4.3]{tromba}) there exists a $6\ga-6$ dimensional submanifold 
$\cal S$ of $\M^s$ passing through $g_0$, 
each element of which is smooth, 
such that metrics $g\in\M^s$ near to $g_0$ can be written uniquely as $f^*g_s$
for some $g_s\in {\cal S}$ and $f\in \D_0^{s+1}$ near the identity.
(Moreover, $f$ and $g_s$ depend smoothly on $g$, and $f$ is unique within the whole of $\D_0^{s+1}$.)
Therefore, any $(u,g)$ near to $(u_0,g_0)$ represents the same element in $\A^s$ as $(u\circ f^{-1},g_s)$,
and we can locally identify a neighbourhood of $[(u_0,g_0)]$ in 
$\A^s$ with a neighbourhood of $(u_0,g_0)$ in the manifold
$H^s(M,N)\times {\cal S}$, which is enough to establish a manifold structure on $\A^s$ once one has checked that the structure is independent of the point $g_0$ where we centred our slice.
Meanwhile, by construction,
the tangent space in the slice ${\cal S}$ at $g_0$ is given by 
$${\cal H}:=\{Re(\phi dz^2)\ |\ \phi dz^2 \text{ is a holomorphic quadratic differential on }(M,g_0)\},$$
and we see that we can identify the tangent space of $\A^s$ 
at $[(u_0,g_0)]$ with
$H^s(u_0^*(TN))\times {\cal H}$.
It then makes sense to define a metric as in 
Section \ref{flow_def_sect} to be 
the $L^2$ norm  on each of the two
components of the tangent space (weighted as desired).
Further details of the classical construction of this type
can be found in \cite{tromba}.

It may be worth noting that if we had defined $\A^s$ by taking the quotient by \emph{all} $H^{s+1}$ diffeomorphisms rather than just those homotopic to the identity, then $\A^s$ would not have become a manifold.

{\sc Mathematical Institute, University of Oxford, Oxford, OX2 6GG, UK}

{\sc Mathematics Institute, University of Warwick, Coventry,
CV4 7AL, UK}

\end{document}